%
%
%
%
%

\documentclass[10pt,oneside,a4paper]{article} 		
\usepackage[fleqn]{amsmath} 							
\usepackage{amsthm,amsfonts,latexsym,amssymb,amscd} 	
\usepackage[mathscr]{eucal}   							
\usepackage[all]{xy}										

\usepackage[draft=false]{hyperref} 					







\newcommand{\CC}{{\mathbb{C}}}

\newcommand{\RR}{{\mathbb{R}}}

\newcommand{\As}{{\mathscr{A}}}\newcommand{\Bs}{{\mathscr{B}}}\newcommand{\Cs}{{\mathscr{C}}}
\newcommand{\Ds}{{\mathscr{D}}}\newcommand{\Es}{{\mathscr{E}}}

\newcommand{\Ms}{{\mathscr{M}}}\newcommand{\Ns}{{\mathscr{N}}}
\newcommand{\Ss}{{\mathscr{S}}}

\DeclareFontFamily{U}{rsfs}{\skewchar\font127 }
\DeclareFontShape{U}{rsfs}{m}{n}{%
   <5> <6> rsfs5
   <7> rsfs7
   <8> <9> <10> <10.95> <12> <14.4> <17.28> <20.74> <24.88> rsfs10
}{}
\DeclareSymbolFont{rsfs}{U}{rsfs}{m}{n}
\DeclareSymbolFontAlphabet{\scr}{rsfs}

\newcommand{\Cf}{\scr{C}}\newcommand{\Hf}{\scr{H}}\newcommand{\Kf}{\scr{K}}


\DeclareMathOperator{\id}{Id}
\DeclareMathOperator{\Ob}{Ob}

\DeclareMathOperator{\Hom}{Hom}

\DeclareMathOperator{\Sp}{Sp}


\renewcommand{\emph}{\textbf} 						
\newcommand{\cj}[1]{\overline{#1}}					
\newcommand{\ip}[2]{\langle #1\mid #2\rangle}	
\renewcommand{\iff}{\Leftrightarrow}				

\newcommand{\hlink}[2]{\href{#1}{\texttt{#2}}} 


\newtheorem{theorem}{Theorem}[section]			

\newtheorem{proposition}[theorem]{Proposition}

\newtheorem{definition}[theorem]{Definition}


\numberwithin{equation}{section}  	
\setlength{\parindent}{0pt} 			
\pagestyle{plain}
\setlength{\textwidth}{400pt}			

\setlength{\hoffset}{-10pt}

\title{\textbf{Kre\u\i n C*-categories\footnote{Published in: Chamchuri Journal of Mathematics 1 (2009) n.2:63-77.}}}

\author{\normalsize Paolo Bertozzini$^a$, Kasemsun Rutamorn\footnote{Current address: Department of Mathematics, Faculty of Education, Dhonburi Rajabhat University, 172 Itsaraphap Road, Thonburi 10600 Bangkok.}$\ ^b$ 
\\
\normalsize  \textit{Department of Mathematics and Statistics, Faculty of Science and Technology}
\\
\normalsize \textit{Thammasat University, Bangkok 12121, Thailand}
\\
\normalsize $^a$e-mail: \texttt{paolo.th@gmail.com}
\\
\normalsize $^b$e-mail: \texttt{kasemamorn270@hotmail.com}
}

\date{\normalsize{published: 14 December 2009, arXiv version: 26 December 2011}}


\begin{document}

\maketitle

\begin{abstract}\noindent 
C*-categories are essentially norm-closed $*$-categories of bounded linear operators between Hilbert spaces. 
The purpose of this work is to identify suitable axioms defining Kre\u\i n C*-categories, i.e.~those categories that play the role of 
C*-categories whenever Hilbert spaces are replaced by more general indefinite inner product Kre\u\i n spaces, and provide some basic examples. Finally we provide a Gel'fand-Na\u\i mark representation theorem for Kre\u\i n C*-algebras and Kre\u\i n 
\hbox{C*-categories}.  

\medskip

\noindent
\emph{Keywords:}
Kre\u\i n space, C*-category. 

\medskip

\noindent
\emph{MSC-2010:}
					47B50, 			
					46C20, 			
					46M15, 			
					46L08. 			
\end{abstract}


\section{Introduction}

It is well know that every C*-algebra is essentially an involutive norm-closed algebra of bounded linear operators on a Hilbert space. 
C*-categories are a generalization of the notion of C*-algebra arising whenever we consider norm-closed families of operators, between (possibly) many Hilbert spaces, that are closed under composition and adjoint.  
They were first introduced in 1985 by P.Ghez-R.Lima-J.Roberts~\cite{GLR} and since then they have been extensively used in algebraic quantum field theory. 
In 2001 P.Mitchener~\cite{M} further examined the definition of C*-categories, studied their $K$-theory and applied them to the 
Baum-Connes conjecture.  

Although indefinite inner product spaces have been considered since the beginning of the last century in phyiscs, Kre\u\i n spaces i.e.~vector spaces equipped with an indefinite inner product, that are complete, were defined (following previous works by L.Pontrjagin~\cite{Po}) by 
J.Ginzburg~\cite{Gi} and E.Scheibe~\cite{Sc} and have been subsequently studied by many mathematicians of the Russian school of M.Kre\u\i n. 

Bounded linear maps on a Kre\u\i n space constitute natural examples of Kre\u\i n C*-algebras 
(they are the analogue of C*-algebras in the case of Kre\u\i n spaces) a concept that was introduced by K.Kawamura~\cite{K} in 2006.

In this work our purpose is to try to provide suitable axioms for the abstract definition of Kre\u\i n C*-categories. In particular our definitions should necessarily include the basic example of bounded linear maps between (possibly) many Kre\u \i n spaces. 
We will also provide a few examples and study some of the properties of Kre\u\i n C*-categories. In particular we will associate a 
Kre\u\i n C*-category to every fundamental symmetry of a Kre\u\i n algebra (or more generally to every fundamental symmetry of a Kre\u\i n 
C*-category) and use well-known Gel'fand-Na\u\i mark representation results for C*-categories in order to provide 
Gel'fand-Na\u\i mark representation theorems for Kre\u\i n C*-algebras and Kre\u\i n C*-categories. 

\bigskip

\textbf{Notes and acknowledgements:}
we thank Dr.Roberto Conti for several discussions and Associate Prof.Wicharn Lewkeeratiyutkul and Associate Prof.Pachara Chaisuriya for kind suggestions.

The following paper (based on K.Rutamorn's Master Degree Thesis in Thammasat University) was originally presented by K.Rutamorn at the ``Annual Pure and Applied Mathematics Conference 2009'', organized on May 25-26 by the Department of Mathematics of Chulalongkorn University, and it appeared in the non-refereed proceedings of the same conference: P.Bertozzini, K.Rutamorn, Kre\u\i n C*-categories, APAM 2009, Collection of Abstract and Presented Papers, 35-42 (2009). 
The work was submitted on 30 July 2009 to Chamchuri Journal of Mathematics and, after minor revision, accepted for publication on 14 December 2009. 
The present file is prepared only for upload to the arXiv and contains a reformatted version of the published paper with minor corrections of misprints and updates in the entries of the original bibliography. 

\section{Preliminaries}

We collect here for the benefit of the reader the main definitions and some properties that will be used in this work. 
For general background in the theory of operator algebras we refer to the books by V.Sunder~\cite{Su}, M.Takesaki~\cite{Ta},  B.Blackadar~\cite{Bl} and for Hilbert \hbox{C*-modules} to E.Lance's book~\cite{La} and the notes by N.Landsman~\cite{L}. 
General definitions and notations from category theory can be obtained from S.MacLane~\cite{Mc} M.Barr-C.Wells~\cite{BW} or R.Geroch~\cite{G} for an elementary introduction.

\subsection{C*-algebras and C*-categories}

\begin{definition}
A complex \emph{C*-algebra} $\As$ is a complex associative algebra (i.e.~a vector space over the complex numbers with a bilinear associative multiplication $\cdot:\As\times\As\to\As$) equipped with a conjugate linear map $*:\As\to\As$ such that 
$(x^*)^*=x$, $(x\cdot y)^*=y^*\cdot x^*$, for all $x,y\in \As$ that is a complete metric space with a norm $\|\cdot\|:\As\to\RR$ such that 
$\|x\cdot y\|\leq\|x\|\cdot\|y\|$ and $\|x^*\cdot x\|=\|x\|^2$, for all $x,y\in \As$. 
The C*-algebra is \emph{unital} if there is an identity element $1_\As$ such that $x\cdot 1_\As=x=1_\As\cdot x$, for all $x\in\As$ and 
$\|1_\As\|=1$.
\end{definition}

\begin{definition}
In a unital C*-algebra $\As$ and element $x\in \As$ is \emph{positive} if it is Hermitian i.e.~$x^*=x$ and its spectrum is positive 
$\Sp(x)\subset \RR_+$, where the spectrum of $x\in \As$ is defined as $\Sp(x):=\{\mu\in \CC \ | \ x-\mu\cdot 1_\As\ \text{is not invertible}\}$. The set of positive elements of $\As$ will be denoted by $\As_>$. 
\end{definition}

Recall that in a unital C*-algebra $x\in \As_>$ if and only if there exists $z\in \As$ such that $x=z^*z$. 

\begin{definition}
A \emph{state} over the unital C*-algebra $\As$ is a linear map $\omega:\As\to\CC$ that is normalized 
i.e.~$\omega(1_\As)=1_\CC$ and positive in the sense that $\omega(x)\in \CC_>=\RR_+$ for all $x\in \As_>$. 

A unital \emph{$*$-homomorphism} $\phi:\As\to\Bs$ between two unital C*-algebras $\As,\Bs$ is a linear map such that 
$\phi(xy)=\phi(x)\phi(y)$ and $\phi(x^*)=\phi(x)^*$, for all $x,y\in \As$ and $\phi(1_\As)=1_\Bs$. 
\end{definition}
States are always Hermitian functionals i.e.~$\omega(x^*)=\cj{\omega(x)}$, for all $x\in\As$. 

\begin{definition} 
A left \emph{Hilbert C*-module} ${}_\As\Ms$ over the C*-algebra $\As$ is a left $\As$-module equipped with an $\As$-valued inner product 
${}_\As\ip{\cdot}{\cdot}:(x,y)\mapsto{}_\As\ip{x}{y}$, i.e.~a sesquilinear map, linear in the left variable, that is Hermitian 
${}_\As\ip{x}{y}^*={}_\As\ip{y}{x}$, positive ${}_\As\ip{x}{x}\in \As_>$, 
non-degenerate ${}_\As\ip{x}{x}=0_\As\iff x=0_\Ms$ in such a way that $\Ms$ becomes a Banach space with the norm 
$\|x\|_\Ms:=\sqrt{{}_\As\ip{x}{x}}$. 
The Hilbert C*-module ${}_\As\Ms$ is \emph{unital} if the C*-algebra $\As$ is unital and $1_\As\cdot x=x$, for all $x\in \Ms$. 

A right Hilbert C*-module $\Ms_\Bs$ over the C*-algebra $\Bs$ is defined in a similar way using a right $\Bs$-module, but in this case the $\Bs$-valued inner product $\ip{x}{y}_\Bs$ is a sesquilinear form that is assumed to be linear in the right entry. 

A \emph{Hilbert C*-bimodule} ${}_\As\Ms_\Bs$ over the C*-algebras $\As$ on the left and $\Bs$ on the right is a left Hilbert C*-module over $\As$ and also a right Hilbert C*-module over $\Bs$ such that $(ax)b=a(xb)$, for all $a\in \As$, $x\in \Ms$, $b\in\Bs$.\footnote{Further conditions might be imposed (such as the adjointability of the right (left) action with respect to the the left (right) inner product, imprimitivity, etc.), but we will not  necessarily impose such requirements.}
\end{definition}

Following P.Ghez-R.Lima-J.Roberts~\cite{GLR} and P.Mitchener~\cite{M} we have the following generalization of the notion of 
C*-algebra. 
\begin{definition}
A complex \emph{C*-category} is category $\Cs$ such that $\Hom_\Cs(A,B)$ is a complex Banach space for all $A,B\in \Ob_\Cs$ that is equipped with a contravariant involutive conjugate linear functor $*:\Cs\to\Cs$ acting identically on the objects such that $\|1_A\|=1$ for all $A\in \Ob_\Cs$, 
$\|x\circ y\|\leq\|x\|\cdot\|y\|$, for all composable $x,y\in \Hom_\Cs$, $\|x^*\circ x\|=\|x\|^2$ for all 
$x\in \Hom_\Cs$ and such that $x^*\circ x$ is a positive element in the C*-algebra $\Hom_\Cs(A,A)$ for all $x\in \Hom_\Cs(B,A)$.
\end{definition}
From the definition we see that in a C*-category $\Cs$, for all objects $A,B\in \Ob_\Cs$, the ``diagonal blocks'' 
$\Hom_\Cs(A,A)$, $\Hom_\Cs(B,B)$ are unital C*-algebras and the the ``off-diagonal blocks'' $\Hom_\Cs(A,B)$ are unital Hilbert C*-bimodules over the C*-algebras $\Hom_\Cs(A,A)$, acting on the left and $\Hom_\Cs(B,B)$ acting on the right. 

Clearly, a C*-category with only one object can be identified with a unital \hbox{C*-algebra} and hence C*-categories are ``many-objects'' versions of unital \hbox{C*-algebras}. 

If $\Hf$ is a family of Hilbert spaces (or more generally Hilbert C*-modules over a given C*-algebra) the collection $\Bs(\Hf)$ of linear continous maps (adjointable maps) between the Hilbert spaces (respectively the modules) in $\Hf$ has the structure of a C*-category with objects given by the spaces in the family $\Hf$, composition, involution and norm given by the usual composition, adjoint and norm of operators.  

\medskip

In the case of C*-categories, the notion of unital \hbox{$*$-homomorphism} is generalized by that of $*$-functor. 
\begin{definition}
Let $\Cs$ and $\Ds$ be two C*-categories. A \emph{covariant $*$-functor} $\phi:\Cs\to\Ds$ is given by a pair of maps $\phi:A\to\phi_A$ between the set of objects and $\phi:x\mapsto \phi(x)$ between the set of morphisms such that $x\in \Hom_\Cs(A,B)$ implies 
$\phi(x)\in \Hom_\Ds(\phi_A,\phi_B)$, $\phi(x\circ y)=\phi(x)\circ\phi(y)$, for all composable $x,y\in \Hom\Cs$ and 
$\phi(x^*)=\phi(x)^*$, for all $x\in \Hom_\Cs$. 
A covariant $*$-functor $\pi:\Cs\to\Bs(\Hf)$ is called a \emph{representation} of the \hbox{C*-category} $\Cs$. 
\end{definition}

We propose the following generalization, in the setting of C*-categories, of the notion of state on a C*-algebra. 
\begin{definition}
A \emph{state} $\omega:\Cs\to\CC$ \emph{on a C*-category} $\Cs$ is map $\omega:\Hom_\Cs\to\CC$ that is linear when restricted to $\Hom_\Cs(A,B)$, for all $A,B\in \Ob_\Cs$, Hermitian i.e.~$\omega(x^*)=\cj{\omega(x)}$, for all $x\in \Hom_\Cs$, 
normalized i.e.~$\omega(1_A)=1_\CC$, for all $A\in \Ob_\Cs$, and positive i.e.~$\omega(x^*\circ x)\geq0$, for all $x\in \Hom_\Cs$. 
\end{definition}

The celebrated Gel'fand-Na\u\i mark-Segal theorem admits a natural ``categorified'' version. 
\begin{theorem} 
Let $\omega$ be a state over the C*-category $\Cs$. There is a representation $\pi_\omega$ of $\Cs$ on a family $H_A$ of Hilbert spaces indexed by objects of $\Cs$ and there exists a family of normalized vectors $\xi_A\in H_A$ such that 
$\omega(x)=\ip{\xi_A}{\pi_\omega(x)\xi_B}_{H_A}$, for all $x\in \Hom_\Cs(A,B)$. 
\end{theorem}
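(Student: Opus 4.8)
The plan is to mimic the classical Gel'fand--Na\u\i mark--Segal construction, performing it ``objectwise''. First I would, for each object $A\in\Ob_\Cs$, introduce the complex vector space $L_A:=\bigoplus_{C\in\Ob_\Cs}\Hom_\Cs(C,A)$ of finitely supported families of morphisms with codomain $A$, and equip it with the sesquilinear form determined by $\ip{y}{y'}_A:=\omega(y^*\circ y')$ for $y\in\Hom_\Cs(C,A)$ and $y'\in\Hom_\Cs(C',A)$, noting that $y^*\circ y'\in\Hom_\Cs(C',C)$ so that $\omega$ is applicable, and extending by sesquilinearity. The candidate cyclic vectors are the classes $\xi_A:=[1_A]$ of the identities, and the representation acts by post-composition, $\pi_\omega(x)[y]:=[x\circ y]$.

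The crucial --- and, I expect, the hardest --- step is to prove that each form $\ip{\cdot}{\cdot}_A$ is positive semidefinite. Because the summands of $L_A$ live in distinct hom-spaces, one cannot write the associated quadratic form directly as $\omega(z^*\circ z)$ for a single morphism $z$, so the bare positivity axiom $\omega(x^*\circ x)\ge 0$ is not immediately enough; what is really needed is the matrix positivity statement that, for $y_1,\dots,y_n$ with $y_i\in\Hom_\Cs(C_i,A)$, the Hermitian matrix $\big[\omega(y_i^*\circ y_j)\big]_{i,j}$ is positive semidefinite. I would establish this by passing to the additive completion $\Cs^\oplus$ of $\Cs$ (formally adjoining finite direct sums of objects), where the family $(y_1,\dots,y_n)$ assembles into a single morphism $Y\colon C_1\oplus\cdots\oplus C_n\to A$; then $Y^*\circ Y$ is a genuine positive element whose ``entries'' are the $y_i^*\circ y_j$, and after extending $\omega$ to a state on $\Cs^\oplus$ the desired inequality follows from the complete positivity of such states. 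This reduction --- verifying that $\omega$ extends and that the extension is matrix-positive --- is where the categorical subtleties concentrate and deserves the most care.

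Granting positivity, the construction proceeds along familiar lines. I would let $N_A:=\{y\in L_A\st\ip{y}{y}_A=0\}$ be the null space (a subspace, by the now available Cauchy--Schwarz inequality), form the quotient $L_A/N_A$, and define $H_A$ as its completion; the classes $\xi_A=[1_A]$ satisfy $\|\xi_A\|^2=\omega(1_A^*\circ 1_A)=\omega(1_A)=1$, so they are normalized. For $x\in\Hom_\Cs(A,B)$ the assignment $\pi_\omega(x)[y]:=[x\circ y]$ is well defined and bounded: using that $\|x\|^2\,1_A-x^*\circ x$ is positive in the C*-algebra $\Hom_\Cs(A,A)$, hence of the form $z^*\circ z$, one obtains $\omega\big(y^*\circ(x^*\circ x)\circ y\big)\le\|x\|^2\,\omega(y^*\circ y)$, i.e.~$\|\pi_\omega(x)[y]\|\le\|x\|\cdot\|[y]\|$. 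Functoriality $\pi_\omega(x'\circ x)=\pi_\omega(x')\circ\pi_\omega(x)$ is immediate from associativity of composition, while $\pi_\omega(x^*)=\pi_\omega(x)^*$ follows from the identity $\ip{[y']}{\pi_\omega(x)[y]}=\omega(y'^*\circ x\circ y)=\ip{\pi_\omega(x^*)[y']}{[y]}$, so that $\pi_\omega$ is a $*$-functor into $\Bs(\{H_A\}_{A\in\Ob_\Cs})$.

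Finally I would verify the reconstruction identity. Evaluating on the cyclic vectors gives, for $x\in\Hom_\Cs(A,B)$,
\begin{equation*}
\ip{\xi_B}{\pi_\omega(x)\xi_A}_{H_B}=\omega\big(1_B^*\circ(x\circ 1_A)\big)=\omega(x),
\end{equation*}
which is the asserted formula, the placement of the objects $A,B$ and of the inner product matching the statement once one fixes the variance conventions for $\Hom_\Cs$ and for $\pi_\omega$ adopted in the paper. It then remains only to record that the family $\{H_A\}_{A\in\Ob_\Cs}$ together with $\pi_\omega$ constitutes a representation in the sense of the earlier definition, completing the argument.
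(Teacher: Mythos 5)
Your architecture is the classical GNS one, but it departs from the paper's proof at exactly the step you yourself flag as hardest, and that departure opens a genuine gap. The paper never puts a single sesquilinear form on $\bigoplus_{C}\Hom_\Cs(C,A)$: it completes each quotient $\Hom_\Cs(C,A)/\Ns^\omega$ \emph{separately} into a Hilbert space $H_{AC}$ --- where positivity of $(y,z)\mapsto\omega(y^{*}\circ z)$ is immediate from the axiom $\omega(x^{*}\circ x)\ge 0$, both arguments lying in the \emph{same} hom-space --- and then takes the \emph{orthogonal} direct sum $H_{A}=\oplus_{C}H_{AC}$, so no cross terms and no matrix positivity ever enter. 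You instead decree that the inner product between the $C$- and $C'$-summands is $\omega(y^{*}\circ y')$, which forces you to prove that $\bigl[\omega(y_i^{*}\circ y_j)\bigr]_{i,j}$ is positive semidefinite for $y_i$ lying in \emph{different} hom-spaces.

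That matrix positivity is not a consequence of the paper's definition of state, and your proposed remedy --- extending $\omega$ to a state on the additive completion $\Cs^{\oplus}$ --- presupposes exactly what must be proved: such an extension exists if and only if $\omega$ is already matrix positive. Concretely, let $\Cs$ have two objects $A,B$ with all four hom-spaces one-dimensional (two copies of $\CC$ as Hilbert spaces); let $\omega$ be the canonical identification with $\CC$ on $\Hom_\Cs(A,A)$ and $\Hom_\Cs(B,B)$, and $100$ times that identification on the two off-diagonal hom-spaces. This $\omega$ is linear on each hom-space, Hermitian, normalized, and satisfies $\omega(x^{*}\circ x)\ge 0$ for every morphism $x$, yet with $y_1=1_A$ and $y_2$ the canonical unitary one obtains the matrix $\left(\begin{smallmatrix}1 & 100\\ 100 & 1\end{smallmatrix}\right)$, which is not positive semidefinite; your form on $L_A$ is then indefinite and the construction collapses. (The same example gives $|\omega(x)|=100\,\|x\|$ off the diagonal, so no representation with normalized vectors can reproduce such an $\omega$; your observation that the cross terms are forced by the reconstruction formula for $x\in\Hom_\Cs(A,B)$ with $A\ne B$ is correct, and it pinpoints why that case of the formula requires more than the stated positivity axiom, a subtlety the paper's orthogonal-sum construction glosses over in its final line.) To stay within what the paper's axioms actually deliver, drop the cross terms and revert to the componentwise completion; the remainder of your write-up (well-definedness modulo the null space, boundedness via positivity of $\|x\|^{2}1_A-x^{*}\circ x$, the $*$-functor identities, the normalization of $\xi_A$) is correct and agrees with the paper.
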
 
\begin{proof}
Consider the set $\Ns^\omega:=\{x\in \Hom_\Cs \ | \ \omega(x^*\circ x)=0\}$ and note that $\Ns^\omega$ is a closed left $*$-ideal in the C*-category 
$\Cs$. For all $A,B\in \Ob_\Cs$, the quotient $\Cs_{BC}/\Ns^\omega_{BC}$ is a pre-Hilbert space, with the 
well-defined inner product given by $\ip{y+\Ns^\omega_{BC}}{z+\Ns^\omega_{BC}}=y^*\circ z + \Ns^\omega_{BC}$, for all 
$y,z\in \Cs_{BC}$ and for every $x\in \Cs_{AB}$, we have for all $C\in \Ob_\Cs$ well-defined continuous maps 
$L^C_x:\Cs_{BC}/\Ns^\omega_{BC}\to\Cs_{AC}/\Ns^\omega_{AC}$ given by left multiplication by $x$ 
i.e.~$L^C_x(z+\Ns^\omega_{BC}):=x\circ z + \Ns^\omega_{AC}$. Upon passing to the Hilbert space completion $H_{BC}$ of the 
pre-Hilbert spaces $\Cs_{BC}/\Ns^\omega_{BC}$ the maps $L^C_x$ extend by continuity to continuos linear maps 
$\pi^C(x):H_{BC}\to H_{AC}$. The maps $\pi^C$ satisfy $\pi^C(x\circ y)=\pi^C(x)\circ\pi^C(y)$, $\pi^C(x^*)=\pi^C(x)^*$, 
$\pi^C(1_A)=\id_{H_{AA}}$ and hence we have a representation $\pi_\omega:=\oplus_{C\in \Ob_\Cs}\pi^C$ of $\Cs$ on the families of Hilbert spaces 
$\{H_B:=\oplus_{C\in \Ob_\Cs} H_{BC}\ | \ B\in \Ob_\Cs\}$. 
Defining $\xi_A:= 1_A+\Ns^\omega_{AA}\in H_{AA}\subset \oplus_{C\in\Ob_\Cs} H_{AC}=H_A$, we can check that 
$\omega(x)=\ip{\xi_A}{\pi^C(x)\xi_B}_{H_{AA}}=\ip{\xi_A}{\pi_\omega(x)\xi_B}_{H_A}$. 
\end{proof}
From this result we can obtain a Gel'fand-Na\u\i mark representation theorem for C*-categories (see~\cite{GLR,M}). 
\begin{theorem}\label{th: gn}
A C*-category $\Cs$ admits an isometric representation in the \hbox{C*-category} $\Bs(\Hf)$ of a family of Hilbert spaces $\Hf$. 
\end{theorem}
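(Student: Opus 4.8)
The plan is to build an isometric representation as a direct sum of the GNS representations supplied by the preceding categorified GNS theorem, the sum being taken over a family of states large enough to recover all norms. Two ingredients are needed: (i) every representation is automatically norm non-increasing, and (ii) for each morphism there is a state whose GNS representation recovers the norm of that morphism.

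First I would record that representations are contractive. If $\pi:\Cs\to\Bs(\Hf)$ is a representation, its restriction to each diagonal block $\Hom_\Cs(A,A)$ is a unital $*$-homomorphism between unital C*-algebras, hence contractive; and for $x\in\Hom_\Cs(A,B)$ the C*-identity in $\Bs(\Hf)$ together with $\pi(x^*\circ x)=\pi(x)^*\circ\pi(x)$ gives $\|\pi(x)\|^2=\|\pi(x)^*\circ\pi(x)\|=\|\pi(x^*\circ x)\|\le\|x^*\circ x\|=\|x\|^2$, so $\|\pi(x)\|\le\|x\|$.

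The key lemma is that for each $x\in\Hom_\Cs(A,B)$ there is a state $\omega$ on $\Cs$ with $\omega(x^*\circ x)=\|x\|^2$. Since $x^*\circ x$ is a positive element of the unital C*-algebra $\Hom_\Cs(A,A)$ with $\|x^*\circ x\|=\|x\|^2$, standard C*-algebra theory yields a state $\rho$ on $\Hom_\Cs(A,A)$ with $\rho(x^*\circ x)=\|x^*\circ x\|$. I then extend $\rho$ to a state $\omega$ on the whole C*-category: choose for every object $C\neq A$ an arbitrary state $\rho_C$ on the unital C*-algebra $\Hom_\Cs(C,C)$, set $\omega$ equal to $\rho$ on the $A$-block and to $\rho_C$ on each $C$-block, and let $\omega$ vanish on every off-diagonal block $\Hom_\Cs(C,D)$ with $C\neq D$. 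This map is linear on each block, is normalized since $\omega(1_C)=\rho_C(1_C)=1$ for all $C$, is Hermitian block by block, and is positive because every morphism $y$ lies in a single block $\Hom_\Cs(C,D)$, so $y^*\circ y\in\Hom_\Cs(C,C)$ is positive and $\omega(y^*\circ y)=\rho_C(y^*\circ y)\ge 0$. Hence $\omega$ is a state with $\omega(x^*\circ x)=\|x\|^2$.

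Finally, applying the categorified GNS theorem to such an $\omega$, with cyclic vectors $\xi_C$ satisfying $\|\xi_A\|^2=\omega(1_A)=1$, I compute $\|\pi_\omega(x)\xi_A\|^2=\ip{\xi_A}{\pi_\omega(x)^*\circ\pi_\omega(x)\,\xi_A}=\omega(x^*\circ x)=\|x\|^2$, whence $\|\pi_\omega(x)\|\ge\|x\|$; combined with contractivity this forces $\|\pi_\omega(x)\|=\|x\|$. Taking the direct sum $\pi:=\bigoplus_\omega\pi_\omega$ over all states, acting on the family $\Hf=\{H_A:=\bigoplus_\omega H_A^\omega \st A\in\Ob_\Cs\}$, yields a representation with $\|\pi(x)\|=\sup_\omega\|\pi_\omega(x)\|=\|x\|$ for every morphism $x$, i.e.\ an isometric representation in $\Bs(\Hf)$. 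The only genuinely delicate step is the construction and verification of the extended state in the key lemma; once its block-diagonal definition is seen to respect positivity and to be normalized at every identity, the rest is the routine universal-direct-sum argument.
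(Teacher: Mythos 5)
Your proof is correct and follows essentially the same route as the paper's: a direct sum of GNS representations over all states, with the C*-identity reducing isometry to the diagonal blocks and a witness state recovering the norm of each positive element $x^*\circ x$. The only difference is one of detail: you explicitly construct the block-diagonal extension of a state on $\Hom_\Cs(A,A)$ to a state on the whole category (vanishing off the diagonal), a step the paper's proof asserts implicitly when it claims the existence of $\omega_x$ with $\|\pi_{\omega_x}(x)\|=\|x\|$.
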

\begin{proof} 
Let $\Ss_\Cs$ be the set of states defined on $\Cs$. Considering $\pi:=\oplus_{\omega\in \Ss_\Cs}\pi_\omega$ we get a new  representation of $\Cs$. Since $\|\pi(x^*x)\|=\|\pi(x)\|^2$, the representation if isometric if and only if it is isometric on the elements of the diagonal blocks of $\Cs$. 
Now for every element $x\in \Cs_{AA}$ there is at least one state $\omega_x$ such that $\|\pi_{\omega_x}(x)\|=\|x\|$  
and hence $\|\pi(x)\|\geq\|\pi_{\omega_x}(x)|=\|x\|$ and the isometry follows. 
\end{proof}

\begin{definition}
A \emph{C*-envelope} $\Es(\Cs)$ of a C*-category $\Cs$ is given by a unital C*-algebra $\Es(\Cs)$ and  a $*$-functor 
$\iota:\Cs\to\Es(\Cs)$ that satisfy the following universal factorization property: 
\\
for all unital $*$-functors $\phi:\Cs\to \As$ into a unital C*-algebra $\As$, there exists one and only one unital $*$-homomorphism 
$\Phi:\Es(\Cs)\to\As$ such that $\Phi\circ\iota=\phi$. 
\end{definition}
By the universal factorization property above, it is a standard result that the C*-envelope of a C*-category is unique up to unital isomorphims of 
C*-algebras. The existence of a C*-envelope is proved as in~\cite[Page~86]{GLR} taking the inductive C*-limit of the finite matrix 
C*-algebras of the C*-category. 

If the C*-category $\Cs$ has only a finite set of objects $\{A_1,\dots,A_N\}$, the construction of the C*-envelope $\Es(\Cs)$ just deliver the matrix 
C*-algebra of $\Cs$, i.e.~the set of $N\times N$ matrices with coeffients entries in position $(i,j)$ taken from the set 
$\Hom_\Cs(A_j,A_i)$, for $i,j=i,\dots, N$; with line by column multiplication and involution given by $*$-transposition.  

Again, from the universal factorization property of C*-envelopes, we obtain this result. 
\begin{proposition}\label{prop: env}
Any $*$-functor $\phi:\Cs\to\Ds$ between C*-categories induces a unique unital $*$-homomorphism of enveloping C*-algebras 
$\Es(\phi):\Es(\Cs)\to\Es(\Ds)$ such that $\Es(\phi)\circ\iota_\Cs=\iota_\Ds\circ\phi$. 
The map $\Es:\phi\mapsto \Es(\phi)$ is functorial and hence it preserves mono epi and isomorphisms. 
\end{proposition}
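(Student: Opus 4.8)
The plan is to exploit the explicit construction of the C*-envelope as the inductive limit of finite matrix C*-algebras, and to read off the induced homomorphism from functoriality of that construction at the level of finite blocks. First I would recall that, for any finite set of objects $\{A_1,\dots,A_N\}\subset\Ob_\Cs$, the matrix C*-algebra $M_{\{A_i\}}(\Cs)$ with entries $\Hom_\Cs(A_j,A_i)$ is a unital C*-algebra, and that $\Es(\Cs)$ is obtained as the inductive C*-limit of these matrix algebras over the directed set of finite subsets of $\Ob_\Cs$, with the canonical $*$-functor $\iota_\Cs$ sending a morphism into the corresponding one-entry matrix. The key observation is that a $*$-functor $\phi:\Cs\to\Ds$ carries objects to objects and morphisms to morphisms compatibly with composition and involution, so it induces, for each finite collection of objects, a unital $*$-homomorphism between the corresponding finite matrix C*-algebras by entrywise application of $\phi$ (this is where one checks that line-by-column multiplication and $*$-transposition are respected, which follows directly from the functor axioms $\phi(x\circ y)=\phi(x)\circ\phi(y)$ and $\phi(x^*)=\phi(x)^*$).

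Next I would verify that these block-level homomorphisms are compatible with the connecting maps of the two inductive systems, so that they assemble into a single unital $*$-homomorphism on the inductive limit. Passing to the C*-limit then yields a unital $*$-homomorphism $\Es(\phi):\Es(\Cs)\to\Es(\Ds)$. The relation $\Es(\phi)\circ\iota_\Cs=\iota_\Ds\circ\phi$ is then immediate from the definition on single-entry matrices. For the asserted uniqueness I would invoke the universal factorization property directly: the composite $\iota_\Ds\circ\phi:\Cs\to\Es(\Ds)$ is a unital $*$-functor into a unital C*-algebra, so by the defining property of the C*-envelope $\Es(\Cs)$ there exists exactly one unital $*$-homomorphism $\Phi:\Es(\Cs)\to\Es(\Ds)$ with $\Phi\circ\iota_\Cs=\iota_\Ds\circ\phi$; this both gives existence again and pins down $\Es(\phi)=\Phi$ uniquely. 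In fact the cleanest exposition is to run the argument purely through the universal property, using the explicit inductive-limit construction only to know that $\Es(\Cs)$ exists.

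Functoriality of $\Es$ I would establish in the standard way. For the identity functor $\id_\Cs$, the map $\iota_\Cs\circ\id_\Cs=\iota_\Cs=\id_{\Es(\Cs)}\circ\iota_\Cs$ shows, by the uniqueness clause, that $\Es(\id_\Cs)=\id_{\Es(\Cs)}$. For a composition $\psi\circ\phi$ of $*$-functors, I would check that both $\Es(\psi)\circ\Es(\phi)$ and $\Es(\psi\circ\phi)$ are unital $*$-homomorphisms $\Es(\Cs)\to\Es(\Es)$ satisfying the same factorization $(\,\cdot\,)\circ\iota_\Cs=\iota_\Es\circ(\psi\circ\phi)$, whence they coincide by uniqueness. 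Functoriality then automatically yields preservation of isomorphisms, since an inverse functor maps under $\Es$ to an inverse homomorphism.

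The main obstacle I anticipate is purely notational rather than conceptual: making the compatibility of the entrywise homomorphisms with the connecting inclusions of the two inductive systems precise, since $\phi$ may be neither injective nor surjective on objects and the indexing finite subsets of $\Ob_\Cs$ and $\Ob_\Ds$ must be matched through $\phi$. Once the universal property is invoked, however, existence and uniqueness of $\Es(\phi)$ are essentially formal, and I would lean on the universal property as the primary engine, relegating the explicit matrix construction to guaranteeing that $\Es(\Cs)$ exists as a genuine unital C*-algebra.
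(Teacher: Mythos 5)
Your argument is correct and follows essentially the same route as the paper, which gives no explicit proof but simply notes that the result follows from the universal factorization property of the C*-envelope --- precisely the mechanism you invoke (applying the universal property of $\Es(\Cs)$ to the unital $*$-functor $\iota_\Ds\circ\phi$, then deducing functoriality from uniqueness). Your additional discussion of the inductive-limit-of-matrix-algebras construction is consistent with the paper's existence argument but is not needed beyond guaranteeing that the envelopes exist, as you yourself observe.
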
 

\subsection{Kre\u\i n Spaces and Kre\u\i n C*-algebras}

Referring to M.Dritschel-J.Rovnyak~\cite{DR} for a more complete treatment, we provide some basic definition and result on Kre\u\i n spaces. 
\begin{definition}
A complex \emph{Kre\u\i n space} is a complex vector space $K$, equipped with a Hermitian sesquilinear form (linear in the second variable) 
$(x,y)\mapsto \ip{x}{y}$, for all $x,y\in K$, that admits at least one direct sum decomposition $K=K_+\oplus K_-$ in orthogonal subspaces such that $K_+$ is a Hilbert space with the restriction of the sesquilinear form and $K_-$ is a Hilbert space with the restriction of the  opposite of sesquilinear form on $K$. 
Any such direct sum decomposition is called a \emph{fundamental decomposition} of the Kre\u\i n space and the linear operator 
\hbox{$J:K\to K$} defined by $J:x_++x_-\mapsto x_+- x_-$ is called a \emph{fundamental symmetry} of the Kre\u\i n space. 
\end{definition}
Of course there is a bijective correspondence between fundamental symmetries and fundamental decompositions of a Kre\u\i n space. A Kre\u\i n space usually admits several different fundamental symmetries. To every fundamental symmetry \hbox{$J:K\to K$} there is a unique associated Hilbert space $|K|_J:=K_+\oplus (-K_-)$ where $-K_-$ denotes the Hilbert space obtained by considering on $K_-$ the opposite of the inner product defined on $K$. 

\begin{proposition}
All the norms $\|\cdot\|_{|K|_J}$ on $K$ obtained from the Hilbert spaces associated to the fundamental symetries $J$ are equivalent and induce a unique topology on the Kre\u\i n space $K$ called the \emph{strong} topology.  
\end{proposition}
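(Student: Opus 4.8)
The plan is to fix two fundamental symmetries $J_1,J_2$ and to prove that the identity map $\iota\colon(K,\|\cdot\|_{|K|_{J_1}})\to(K,\|\cdot\|_{|K|_{J_2}})$ is a homeomorphism; once this is shown for an arbitrary pair, all the norms are pairwise equivalent and hence determine one and the same topology, which we may then legitimately call the strong topology. Both $(K,\|\cdot\|_{|K|_{J_1}})$ and $(K,\|\cdot\|_{|K|_{J_2}})$ are complete, being by definition the Hilbert spaces $|K|_{J_1}$ and $|K|_{J_2}$, so the natural tool is the closed graph theorem: it suffices to check that $\iota$ has closed graph, and then $\iota^{-1}$ has closed graph as well by symmetry, yielding boundedness of both and thus equivalence of the two norms.

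First I would record the key estimate that the indefinite form $\ip{\cdot}{\cdot}$ is jointly continuous for each norm $\|\cdot\|_{|K|_J}$. Writing $\ip{x}{y}_J:=\ip{x}{Jy}$ for the positive definite inner product of $|K|_J$ and using $J^2=\id_K$ together with the self-adjointness of $J$ for the Krein form, I expect to obtain $\ip{x}{y}=\ip{x}{Jy}_J$ and $\|Jy\|_{|K|_J}=\|y\|_{|K|_J}$, so that Cauchy--Schwarz in $|K|_J$ delivers $|\ip{x}{y}|\le\|x\|_{|K|_J}\,\|y\|_{|K|_J}$ for all $x,y\in K$. In particular, for every fixed $z\in K$ the functional $x\mapsto\ip{z}{x}$ is continuous with respect to each $\|\cdot\|_{|K|_J}$.

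With this in hand I would verify that the graph of $\iota$ is closed. Assume $x_n\to x$ in $\|\cdot\|_{|K|_{J_1}}$ and $x_n\to y$ in $\|\cdot\|_{|K|_{J_2}}$; then for every $z\in K$ the continuity just established gives $\ip{z}{x_n}\to\ip{z}{x}$ and $\ip{z}{x_n}\to\ip{z}{y}$, whence $\ip{z}{x-y}=0$ for all $z$. Since a fundamental decomposition makes $\ip{\cdot}{\cdot}$ non-degenerate (if $\ip{z}{w}=0$ for all $z$, testing against the components $w_+$ and $w_-$ forces $w=0$), I conclude $x=y$ and the graph is closed. The closed graph theorem then makes $\iota$ bounded, and interchanging the roles of $J_1$ and $J_2$ makes $\iota^{-1}$ bounded too, which is precisely the asserted equivalence.

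The main obstacle I anticipate is the continuity estimate $|\ip{x}{y}|\le\|x\|_{|K|_J}\|y\|_{|K|_J}$, that is, identifying the Krein form as $\ip{\cdot}{J\,\cdot}_J$ and checking that $J$ is a $\|\cdot\|_{|K|_J}$-isometry; everything else reduces to the (completeness-powered) closed graph theorem and the elementary non-degeneracy of the form. I would stress that completeness is essential here and is available precisely because each $|K|_J$ is, by hypothesis, a genuine Hilbert space.
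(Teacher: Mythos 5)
Your argument is correct and complete: the joint continuity estimate $|\ip{x}{y}|\le\|x\|_{|K|_J}\|y\|_{|K|_J}$ (via $\ip{x}{y}=\ip{x}{Jy}_J$ and the $\|\cdot\|_{|K|_J}$-isometry of $J$), the non-degeneracy of the form, and the closed graph theorem between the two complete spaces $|K|_{J_1}$ and $|K|_{J_2}$ together yield the equivalence of the norms. The paper itself states this proposition without proof, deferring to the cited reference of Dritschel--Rovnyak, and the closed-graph argument you give is precisely the classical proof found there, so there is nothing to correct.
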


The previous definitions and results have been extended to Kre\u\i n C*-modules (see the details in S.Kaewumpai's thesis~\cite{Ka}).
\begin{definition}
A (unital) right \emph{Kre\u\i n C*-module} $\Ms_\As$ over the (unital) \hbox{C*-algebra} $\As$ is a (unital) module over $\As$ equipped with a Hermitian sesquilinear form $(x,y)\mapsto \ip{x}{y}_\As$, for all $x,y\in \Ms$, that admits at least one direct sum decomposition 
$\Ms=\Ms_+\oplus \Ms_-$ in orthogonal $\As$-submodules such that $\Ms_+$ and $-\Ms_-$ are Hilbert C*-modules over 
$\As$ (again here $-\Ms_-$ denotes the module $\Ms_-$ with the restriction of the opposite of the sequilinear form on $\Ms$). 
\end{definition}
Fundamental symmetries are defined in the same way in this more general context of Kre\u\i n C*-modules.
\begin{proposition}
On a Kre\u\i n C*-module $\Ms_\As$ over the C*-algebra $\As$ there is a unique strong topology that is induced by any of the equivalent norms on the Hilbert C*-modules $|\Ms_\As|_J:=\Ms_+\oplus (-\Ms_-)$ given, for $x\in \Ms_\As$, by
\begin{equation*} 
\|x\|_{|\Ms|_J}:=\sqrt{\|\ip{x_+}{x_+}_\As-\ip{x_-}{x_-}_\As\|}.
\end{equation*} 
\end{proposition}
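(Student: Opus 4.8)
The plan is to establish the statement in two stages: first that the displayed formula really does define a norm turning $|\Ms_\As|_J$ into a Hilbert C*-module (so the expression makes sense), and then---the actual content---that the norms attached to any two fundamental symmetries are equivalent, from which it follows that they all determine one and the same topology on $\Ms$. The first stage is essentially definitional: by the definition of a Kre\u\i n C*-module, $\Ms_+$ and $-\Ms_-$ are Hilbert C*-modules over $\As$, so their orthogonal direct sum $|\Ms_\As|_J=\Ms_+\oplus(-\Ms_-)$ carries the positive, non-degenerate $\As$-valued inner product $\ip{x}{y}_J:=\ip{x_+}{y_+}_\As-\ip{x_-}{y_-}_\As$, and the displayed norm is exactly $\|x\|_{|\Ms|_J}=\sqrt{\|\ip{x}{x}_J\|}$. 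In particular each $|\Ms_\As|_J$ is complete, hence a Banach space whose underlying vector space is $\Ms$ itself.

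The decisive observation is the identity $\ip{x}{w}_\As=\ip{x}{Jw}_J$ for all $x,w\in\Ms$, which follows by expanding both sides in the fundamental decomposition, using orthogonality of $\Ms_+$ and $\Ms_-$ (which kills the cross terms) together with $J^{2}=\id$. Combined with the Cauchy--Schwarz inequality for Hilbert C*-modules, $\|\ip{x}{y}_J\|\le\|x\|_{|\Ms|_J}\,\|y\|_{|\Ms|_J}$, and the isometry of $J$ on $|\Ms_\As|_J$ (so that $\|Jw\|_{|\Ms|_J}=\|w\|_{|\Ms|_J}$), this identity shows that the Kre\u\i n form is separately continuous with respect to each norm $\|\cdot\|_{|\Ms|_J}$. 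This is the only point where the analytic structure genuinely enters.

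Now I would fix two fundamental symmetries $J_1,J_2$ and consider the set-theoretic identity map $\id\colon|\Ms_\As|_{J_1}\to|\Ms_\As|_{J_2}$, a linear bijection between two Banach spaces on the common vector space $\Ms$, and apply the closed graph theorem. Suppose $x_n\to0$ in $\|\cdot\|_{|\Ms|_{J_1}}$ and $x_n\to y$ in $\|\cdot\|_{|\Ms|_{J_2}}$. For every $w\in\Ms$ the identity above gives, on the one hand, $\ip{x_n}{w}_\As=\ip{x_n}{J_1w}_{J_1}\to0$ by continuity in the $J_1$-norm, and, on the other hand, $\ip{x_n}{w}_\As=\ip{x_n}{J_2w}_{J_2}\to\ip{y}{J_2w}_{J_2}=\ip{y}{w}_\As$ by continuity in the $J_2$-norm. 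Hence $\ip{y}{w}_\As=0$ for all $w$; taking $w=J_2y$ yields $\ip{y}{y}_{J_2}=0$, so $y=0$ by non-degeneracy. Thus $\id$ has closed graph and is therefore bounded; exchanging the roles of $J_1$ and $J_2$ shows its inverse is bounded as well, and the two norms are equivalent.

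Since equivalent norms induce the same topology and $J_1,J_2$ were arbitrary, all the norms $\|\cdot\|_{|\Ms|_J}$ define a single topology on $\Ms$---the strong topology---and its uniqueness is exactly this independence of the choice of $J$. The main obstacle, and the step requiring the most care, is the verification that the graph of $\id$ is closed: one must correctly deploy the module Cauchy--Schwarz inequality to get continuity of the $\As$-valued Kre\u\i n form in each norm, and then invoke non-degeneracy of the Hilbert C*-module inner product to force $y=0$; the remainder is bookkeeping. One should also note that the closed graph theorem is legitimately available here precisely because each $|\Ms_\As|_J$, being a finite orthogonal direct sum of Hilbert C*-modules, is complete.
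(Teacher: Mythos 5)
The paper itself gives no proof of this proposition: it is stated in the Preliminaries and the reader is referred to S.~Kaewumpai's thesis \cite{Ka} for details. Your argument is the standard closed-graph proof of norm equivalence for KreÄ­n spaces (as in Dritschel--Rovnyak) transplanted to the C*-module setting, and it is correct. The two load-bearing points are exactly the ones you isolate: the identity $\ip{x}{w}_\As=\ip{x}{Jw}_J$ (valid because the cross terms vanish by orthogonality and $J^2=\id$), which together with the module Cauchy--Schwarz inequality $\|\ip{x}{y}_J\|\le\|x\|_{|\Ms|_J}\|y\|_{|\Ms|_J}$ makes the indefinite $\As$-valued form jointly bounded for each $J$-norm; and the completeness of each $|\Ms_\As|_J$, which licenses the closed graph theorem. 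The closed-graph step itself is clean: if $x_n\to 0$ in the $J_1$-norm and $x_n\to y$ in the $J_2$-norm, then $\ip{y}{w}_\As=0$ for all $w$, and $w=J_2y$ gives $\ip{y}{y}_{J_2}=0$, hence $y=0$ by non-degeneracy of the Hilbert C*-module inner product on $|\Ms_\As|_{J_2}$. One small point worth writing out in your first stage: non-degeneracy of $\ip{\cdot}{\cdot}_J$ on the direct sum uses that in a C*-algebra two positive elements summing to zero must both vanish, so that $\ip{x}{x}_J=0$ forces $\ip{x_+}{x_+}_\As=0$ and $\ip{x_-}{x_-}_\As=0$ separately. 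With that noted, the proof is complete.
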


It is well-known, from the Gel'fand-Na\u\i mark representation theorem, that the axioms listed in the definition of C*-algebra characterise those 
$*$-algebras that are isomorphic to closed subalgebras of continuous linear operators on a Hilbert space. 
Similarly the axioms in the definition of C*-category characterise those involutive categories that are isomorphic to closed  
$*$-subcategories of operators between Hilbert spaces. 

A natural problem arising in the setting of Kre\u\i n spaces is the study of the axiomatic characterization of those $*$-algebras and 
$*$-categories that are isomorphic to ``suitable'' closed $*$-algebras, or closed $*$-categories, of linear continuous operators between Kre\u\i n spaces. 

The following is a variation of K.Kawamura~\cite{K} definition of Kre\u\i n C*-algebra.\footnote{Attention that in K.Kawamura's original definition a 
Kre\u\i n C*-algebra is a Banach algebra, but here we only assume that a Kre\u\i n C*-algebra is ``Banachable'' and so its norm is not necessarily fixed.}  

\begin{definition}\label{def: krein}
A complex \emph{Kre\u\i n C*-algebra} is a complex $*$-algebra that admits at least one Banach norm and at least one 
$*$-automorphism $\alpha:\As\to\As$ such that $\alpha\circ\alpha=\id_\As$ and $\|\alpha(x^*)x\|=\|x\|^2$, for all $x\in \As$. 
\end{definition}
An automorphism that satisfies the previous properties is called a \emph{fundamental symmetry} of the Kre\u\i n C*-algebra. 
Every Kre\u\i n C*-algebra usually admits several different fundamental symmetries. For every fundamental symmetry $\alpha$ the 
Kre\u\i n C*-algebra $\As$ becomes a C*-algebra with the new involution defined by $x^{\dag_\alpha}:=\alpha(x^*)$, for $x\in \As$, and so (since in a C*-algebra the norm is unique) for every fundamental symmetry there exists only one norm, here denoted by 
$\|\cdot\|_\alpha$, that satisfies the C*-property in the previous definition. 

Examples of Kre\u\i n C*-algebras are immediately found in algebras of operators on Kre\u\i n spaces and more generally Kre\u\i n 
C*-modules~\cite{Ka}.
\begin{theorem} 
Let $\Ms_\As$ be a unital Kre\u\i n C*-module over the unital C*-algebra $\As$. The $*$-algebra $\Bs(\Ms_\As)$ of adjointable operators on $\Ms_\As$ is a Kre\u\i n C*-algebra. 
\end{theorem}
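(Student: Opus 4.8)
The plan is to fix one fundamental symmetry $J$ of the Kre\u\i n C*-module $\Ms_\As$ and use it to transport the well-understood C*-algebra structure of the adjointable operators on the associated Hilbert C*-module $|\Ms_\As|_J$ onto $\Bs(\Ms_\As)$. First I would record the elementary properties of $J$. Since $\Ms_+$ and $\Ms_-$ are $\As$-submodules on which $J$ acts as $+\id$ and $-\id$, the map $J$ is $\As$-linear with $J^2=\id_\Ms$; and the orthogonality of the decomposition gives
\[
\ip{Jx}{y}_\As=\ip{x_+}{y_+}_\As-\ip{x_-}{y_-}_\As=\ip{x}{Jy}_\As ,
\]
so that $J$ is Kre\u\i n self-adjoint, i.e.~$J\in\Bs(\Ms_\As)$ with $J^*=J$. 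Moreover the Hilbert C*-module inner product on $|\Ms_\As|_J$ is exactly $\ip{x}{y}_J:=\ip{Jx}{y}_\As$.

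Next I would reconcile the two notions of adjointability. Writing $*$ for the Kre\u\i n adjoint (defined by $\ip{Tx}{y}_\As=\ip{x}{T^*y}_\As$) and $\dagger$ for the adjoint with respect to $\ip{\cdot}{\cdot}_J$, a direct computation using $J^*=J$ and $J^2=\id$ gives, for every Kre\u\i n-adjointable $T$,
\[
\ip{Tx}{y}_J=\ip{JTx}{y}_\As=\ip{x}{T^*Jy}_\As=\ip{x}{JT^*Jy}_J ,
\]
so $T$ is $\ip{\cdot}{\cdot}_J$-adjointable with $T^\dagger=JT^*J$; the converse is symmetric. Hence $\Bs(\Ms_\As)$ and $\Bs(|\Ms_\As|_J)$ coincide as sets of operators, and $\Bs(\Ms_\As)$ inherits the operator norm $\|\cdot\|_J$ of the Hilbert C*-module $|\Ms_\As|_J$, a complete submultiplicative norm that furnishes the Banach norm required on the complex $*$-algebra $(\Bs(\Ms_\As),*)$.

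With this dictionary established I would propose the fundamental symmetry $\alpha(T):=JTJ$. From $J^2=\id$ one sees immediately that $\alpha$ is linear, multiplicative and satisfies $\alpha\circ\alpha=\id$, while $J^*=J$ gives $\alpha(T^*)=JT^*J=(JTJ)^*=\alpha(T)^*$, so $\alpha$ is a period-two $*$-automorphism of $(\Bs(\Ms_\As),*)$. Finally, since $\alpha(T^*)=JT^*J=T^\dagger$ is precisely the $\dagger$-adjoint, the C*-identity valid in the genuine C*-algebra $\Bs(|\Ms_\As|_J)$ becomes
\[
\|\alpha(T^*)\,T\|_J=\|T^\dagger T\|_J=\|T\|_J^2 ,
\]
which is exactly the defining property in Definition~\ref{def: krein}. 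Thus $\alpha$ is a fundamental symmetry and $\Bs(\Ms_\As)$ is a Kre\u\i n C*-algebra.

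I expect the only genuinely delicate point to be the identification of the two classes of adjointable operators: on a Hilbert C*-module boundedness does not by itself guarantee adjointability, so one must verify carefully that $JT^*J$ really is the $\dagger$-adjoint (and conversely that $JT^\dagger J$ is the Kre\u\i n adjoint), which is what makes $\Bs(\Ms_\As)$ and $\Bs(|\Ms_\As|_J)$ genuinely the same object. Once the relations $\ip{\cdot}{\cdot}_J=\ip{J\,\cdot\,}{\cdot}_\As$, $J^2=\id$ and $J^*=J$ are in place, everything else is a routine algebraic check. A secondary point worth a remark is that the norm entering the Kre\u\i n C*-axiom must coincide with the norm making the algebra Banach; this is automatic here, since both are the operator norm of $\Bs(|\Ms_\As|_J)$.
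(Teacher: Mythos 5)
Your proof is correct, and it supplies in full the argument that the paper itself omits (the theorem is stated there without proof, with a reference to Kaewumpai's thesis). The route you take --- identifying $\Bs(\Ms_\As)$ with $\Bs(|\Ms_\As|_J)$ via $T^\dagger=JT^*J$ and taking $\alpha(T):=JTJ$ as the fundamental symmetry --- is exactly the construction the paper relies on elsewhere (e.g.\ the map $\alpha_J(T):=JTJ$ on $\Bs(K)$ in the Gel'fand--Na\u\i mark theorem), so nothing further is needed.
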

In particular for every Kre\u\i n space $K$, the algebra of linear continuous maps $\Bs(K)$ is a Kre\u\i n C*-algebra and similarly all the  $*$-subalgebras of $\Bs(K)$ that are invariant for at least one of the fundamental symmetries of $\Bs(K)$ and that are 
closed in the strong topology induced by the strong topology of $K$ are Kre\u\i n C*-algebras.\footnote{As suggested by the referee, the fact that a 
$*$-subalgebra $\As$ of $\Bs(K)$ that is closed in the strong topology is not necessarily a Kre\u\i n C*-algebra (unless one of the fundamental symmetries of $\Bs(K)$ restricts to $\As$), might suggest further improvements or generalizations in the definition of Kre\u\i n C*-algebra. In this work we will limit ourselves to the given definition~\ref{def: krein} and we will assume that  the fundamental symmetries of a Kre\u\i n C*-subalgebra of 
$\Bs(K)$ are restrictions of fundamental symmetries of $\Bs(K)$ and hence every Kre\u\i n C*-subalgebra of $\Bs(K)$ is necessarily stable by the action of one of the fundamental symmetries of $\Bs(K)$.}

In order to probe if the definition of Kre\u\i n C*-algebra is suitable for an axiomatic characterization of those closed $*$-algebras of adjointable operators on Kre\u\i n spaces or Kre\u\i n C*-modules that are stable for the adjoint action of one of the fundamental symmetries of the space, we will make use here of techniques of representation of 
C*-categories and we will begin to develope here a theory of Kre\u\i n C*-categories.  

\section{Kre\u\i n C*-categories}

As immediate categorical generalization of the notion of Kre\u\i n C*-algebra, we propose the following definition.  
\begin{definition}
A \textbf{Kre\u\i n C*-category} is a $*$-category $\Cs$ that admits at least one norm making all the normed spaces $\Hom_\Cs(A,B)$ Banach spaces, for all \hbox{$A,B\in \Ob_\Cs$} and at least one covariant $*$-functor $\alpha:\Cs \to \Cs$ such that:  
\begin{gather}
\alpha\circ\alpha=\id_\Cs,  
\\
\alpha(A)=A\ , \quad \forall A\in \Ob_\Cs, 
\\
\|\alpha(x^*)x\|=\|x\|^2, \quad \forall x\in \Hom_\Cs(A,B), \ \forall A,B\in \Ob_\Cs, 
\\
\forall x\in \Hom_\Cs(B,A), \ \forall A,B\in \Ob_\Cs \ 
\alpha (x^*)x \in \Hom_\Cs(A,A)_>.
\end{gather}
Note that from the first three axioms above, for all objects $A$, the set $\Hom_\Cs(A,A)$ is always a Kre\u\i n C*-algebra and so it is a C*-algebra when the involution is defined as $x^{\dag_\alpha}:=\alpha(x^*)$ so that the last axiom above tells that $\alpha(x^*)x$ is always a positive element in the C*-algebra $\Hom_\Cs(A,A)$ with involution $\dag_\alpha$.   
\end{definition} 
An alternative way to state the last axiom would be to say that for all $x\in \Hom_\Cs(B,A)$, there exists 
$z\in\Hom_\Cs(A,A)$ such that $\alpha(x^*)x=\alpha(z^*)z$. One might also say that the spectrum of $\alpha(x^*)x$ is always positive. 

\medskip

We provide immediately some elementary examples of Kre\u\i n C*-categories. 

\begin{proposition}\label{prop: kc*}
Let $\Cs$ be a C*-category and $\alpha:\Cs\to\Cs$ be a covariant $\dag$-functor such that 
$\alpha\circ\alpha=\id_\Cs$, and $\alpha(A)=A$, for all $A\in \Ob_\Cs$. 
Then $\Cs$ becomes a Kre\u\i n C*-category with the new involution $x^{*_\alpha}=\alpha(x^\dag)$.
\end{proposition}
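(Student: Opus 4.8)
The plan is to equip $\Cs$ with the involution $*_\alpha$ and to take the fundamental symmetry of the resulting Kre\u\i n C*-category to be $\alpha$ itself; every required axiom should then reduce, through the cancellation $\alpha\circ\alpha=\id_\Cs$, to a property already possessed by the original C*-category. First I would confirm that $x\mapsto x^{*_\alpha}:=\alpha(x^\dag)$ is a legitimate involution, i.e.\ a contravariant conjugate-linear functor fixing objects. Object-fixing is immediate from $\alpha(A)=A$, and conjugate linearity holds because $\dag$ is conjugate linear while $\alpha$ is linear on each $\Hom$-space. Contravariance is the computation $(x\circ y)^{*_\alpha}=\alpha\big((x\circ y)^\dag\big)=\alpha(y^\dag\circ x^\dag)=\alpha(y^\dag)\circ\alpha(x^\dag)=y^{*_\alpha}\circ x^{*_\alpha}$, using that $\dag$ is contravariant and $\alpha$ a covariant functor, and involutivity is $(x^{*_\alpha})^{*_\alpha}=\alpha\big(\alpha(x^\dag)^\dag\big)=\alpha(\alpha(x))=x$, using in turn that $\alpha$ is a $\dag$-functor, that $\dag$ is involutive, and that $\alpha\circ\alpha=\id_\Cs$. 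Since the original norm already makes each $\Hom_\Cs(A,B)$ a Banach space, the first requirement in the definition of Kre\u\i n C*-category is satisfied.

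Next I would check the four axioms with $\alpha$ as fundamental symmetry, the essential point being the cancellation
\begin{equation*}
\alpha(x^{*_\alpha})=\alpha\big(\alpha(x^\dag)\big)=(\alpha\circ\alpha)(x^\dag)=x^\dag.
\end{equation*}
This instantly yields $\alpha(x^{*_\alpha})=\alpha(x)^{*_\alpha}$, so $\alpha$ is a $*_\alpha$-functor as required, while the first two axioms $\alpha\circ\alpha=\id_\Cs$ and $\alpha(A)=A$ are hypotheses. For the C*-norm axiom I compute $\|\alpha(x^{*_\alpha})\,x\|=\|x^\dag x\|=\|x\|^2$ from the original C*-identity, and for positivity I observe that $\alpha(x^{*_\alpha})\,x=x^\dag x$ lies in $\Hom_\Cs(A,A)_>$ because $x^\dag x$ is positive in the C*-algebra $\Hom_\Cs(A,A)$ by the defining axiom of a C*-category.

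I do not expect a genuine obstacle: the entire content is that the new C*-conditions, once expanded via $*_\alpha=\alpha\circ\dag$, collapse onto the old ones precisely because $\alpha$ is an involution intertwining $\dag$. The only place demanding a little care is keeping track of where $\alpha$, being covariant, preserves the order of a composition while $\dag$ reverses it, so that $*_\alpha$ indeed comes out contravariant; once this bookkeeping is fixed, each remaining step is a direct substitution.
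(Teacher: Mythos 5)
Your verification is correct, and it is exactly the routine check the paper has in mind: the paper states Proposition~\ref{prop: kc*} without proof, treating it as immediate from the cancellation $\alpha(x^{*_\alpha})=x^\dag$, which is precisely the identity your argument is built on. All four axioms of the Kre\u\i n C*-category definition, the contravariance and involutivity of $*_\alpha$, and the fact that $\alpha$ remains a $*_\alpha$-functor are checked correctly, so your write-up simply supplies the details the paper omits.
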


\begin{proposition}
Let $\Kf_\CC$ be the class of Kre\u\i n spaces with composition of linear continuous mapping and involution given by the Kre\u\i n space adjoint. 
Then $\Kf_\CC$ is a Kre\u\i n C*-category.

More generally let $\Kf_\As$ be the class of Kre\u\i n C*-modules over the C*-algebra $\As$ with composition of Kre\u\i n adjointable maps and their 
Kre\u\i n adjoint as involution. Then $\Kf_\As$ is a Kre\u\i n C*-category.
\end{proposition}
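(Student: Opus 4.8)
The plan is to prove both assertions simultaneously by reducing every axiom to the corresponding classical statement about the Hilbert spaces (respectively Hilbert C*-modules) attached to a choice of fundamental symmetries. That $\Kf_\CC$ (respectively $\Kf_\As$) is a $*$-category — associative composition of continuous/adjointable maps, with the Kre\u\i n adjoint as a contravariant conjugate-linear involution — is routine, so the real content is to exhibit one norm and one functor $\alpha$ satisfying the four displayed axioms. First I would use the axiom of choice to fix, for every object $K$ (respectively $\Ms$) in the family, a fundamental symmetry $J_K$ together with its associated Hilbert space $|K|_{J_K}$ (respectively Hilbert C*-module $|\Ms|_{J_K}$). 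I would then equip each $\Hom(K,K')$ with the operator norm coming from these Hilbert structures; since a linear map between Kre\u\i n spaces is strongly continuous exactly when it is bounded for the (equivalent) Hilbert norms, and bounded operators between Hilbert spaces — adjointable operators between Hilbert C*-modules — form a Banach space, each $\Hom(K,K')$ is a Banach space, which supplies the required norm.

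Next I would define the candidate functor on morphisms by $\alpha(T):=J_{K'}\,T\,J_K$ for $T\in\Hom(K,K')$, leaving objects fixed so that $\alpha(K)=K$. Covariant functoriality $\alpha(T\circ S)=\alpha(T)\circ\alpha(S)$ and the involutivity $\alpha\circ\alpha=\id$ both drop out immediately from $J_K^2=\id_K$, the defining property of a fundamental symmetry. The technical heart is to check that $\alpha$ is a $*$-functor for the Kre\u\i n adjoint. The key tool is the identity comparing the Kre\u\i n adjoint $T^*$ with the Hilbert adjoint $T^\dag$ taken with respect to the chosen Hilbert structures, namely $T^\dag=J_K\,T^*\,J_{K'}$; this follows from the self-adjointness $J^*=J$ of a fundamental symmetry for the indefinite form together with a direct manipulation of the two inner products. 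From it I can read off $\alpha(T)^*=J_K\,T^*\,J_{K'}=\alpha(T^*)$, so $\alpha$ does intertwine the Kre\u\i n involution, and moreover $\alpha(T^*)=T^\dag$.

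Finally I would dispatch the two remaining axioms. Because $\alpha(T^*)=T^\dag$ is precisely the Hilbert adjoint, the product $\alpha(T^*)\,T=T^\dag T$ is an operator on the Hilbert space $|K|_{J_K}$ (respectively the Hilbert C*-module $|\Ms|_{J_K}$), so the C*-identity $\|\alpha(T^*)\,T\|=\|T^\dag T\|=\|T\|^2$ is just the usual C*-property of bounded/adjointable operators. For the positivity axiom, observe that the diagonal block $\Hom(K,K)$ endowed with the involution $\dag_\alpha=\alpha(\cdot^{*})$ is exactly the C*-algebra $\Bs(|K|_{J_K})$ — this is precisely the content of the earlier theorem that $\Bs(\Ms_\As)$ is a Kre\u\i n C*-algebra — and in that C*-algebra every element $T^\dag T$ is positive, since $\ip{x}{T^\dag T x}=\ip{Tx}{Tx}\ge 0$. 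Hence $\alpha(T^*)\,T\in\Hom(K,K)_>$, and all four axioms hold.

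The main obstacle I anticipate is organizational rather than conceptual: one must make the choice of fundamental symmetries uniformly over the whole (possibly proper) class of objects so that $\alpha$ is genuinely a functor, and then execute the adjoint-comparison identity $T^\dag=J_K\,T^*\,J_{K'}$ with care, tracking on which side each symmetry acts and remembering that $*$ reverses the arrow between the two objects. Once that identity is secured, every axiom collapses to a classical fact about Hilbert spaces or Hilbert C*-modules, so the module case $\Kf_\As$ requires no new idea beyond replacing ``Hilbert space'' by ``Hilbert C*-module'' throughout.
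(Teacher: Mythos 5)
Your proof is correct, and it is essentially the argument the paper intends: the paper states this proposition without proof, but the construction you give --- fixing a fundamental symmetry $J_K$ for each object, taking the operator norm from the associated Hilbert spaces (modules), and setting $\alpha(T):=J_{K'}TJ_K$ so that $\alpha(T^*)=T^\dag$ via the identity $T^\dag=J_KT^*J_{K'}$ --- is exactly the mechanism the paper itself deploys later, where $\alpha_J(T):=JTJ$ is used as the fundamental symmetry of $\Bs(K)$ in the Gel'fand--Na\u\i mark theorem. All four axioms then reduce, as you say, to the classical C*-properties of $\Bs(\Hf)$, and your remark that the choice of symmetries must be made uniformly over the class of objects is the only genuine (set-theoretic) care point.
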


We proceed now to find examples of Kre\u\i n C*-categories naturally associated to Kre\u\i n algebras equipped with a given  fundamental symmetry. 

Motivated somehow from the definition of $*$-bimodule given by N.Waever~\cite{We2}, \cite[Definition~9.6.1]{We1}, we propose the following definition of involutive C*-bimodule:
\begin{definition}
An \textbf{involutive Hilbert C*-bimodule over the C*-algebra $\mathscr A$} is a Hilbert C*-bimodule $\Ms$ over $\As$ that is further equipped with a map \hbox{$*:\Ms\to \Ms$} such that:
\begin{gather*}
(x^*)^*=x, \quad \forall x\in \Ms, 
\\
(ax+y)^*=x^*a^*+y^*, \quad \forall x,y\in \Ms, \ \forall a\in \As, 
\\
{}_{\As}\ip{x}{y}^*=\ip{y^*}{x^*}_{\As}, \quad \forall x,y\in \Ms.
\end{gather*}
\end{definition}

The module involution above essentially provides an indentification of the Rieffel dual of the Hilbert C*-bimodule $\Ms$ with $\Ms$ itself and so it is not surprising the following reformulation of the linking C*-category of a Hilbert C*-bimodule. 

\begin{proposition}
Let $\As$ be a unital C*-algebra and let $\Ms$ be a unital involutive Hilbert C*-bimodule over $\As$ such that 
\begin{equation*}
{}_{\As}\ip{x}{y} z=x\ip{y}{z}_{\As}, \quad \forall x,y,z\in \Ms. 
\end{equation*}
There exists a C*-category $\Cs$ with two objects $\Ob_{\Cs}:=\{+,-\}$ with morphisms 
\begin{gather*}
{\Cs}_{++}:=\As=:{\Cs}_{--}, \qquad 
{\Cs}_{+-}:=\Ms=:{\Cs}_{-+}, 
\end{gather*}
and compositions given by:
\begin{gather*}
(a,b)\mapsto a\cdot_\As b\in {\Cs}_{++}, \quad \forall a,b\in {\Cs}_{++}, 
\\
(a,b)\mapsto a\cdot_\As b\in {\Cs}_{--}, \quad \forall a,b\in {\Cs}_{--}, 
\\
(a,x)\mapsto a\cdot_\Ms x\in {\Cs}_{+-}, \quad \forall a\in {\Cs}_{++}, \ \forall x\in {\Cs}_{+-} 
\\
(y,b)\mapsto y\cdot_\Ms b\in {\Cs}_{+-}, \quad \forall b\in {\Cs}_{--}, \ \forall y\in {\Cs}_{+-} 
\\
(x,a)\mapsto x\cdot_\Ms a\in {\Cs}_{+-}, \quad \forall a\in {\Cs}_{++}, \ \forall x\in {\Cs}_{-+} 
\\
(b,y)\mapsto b\cdot_\Ms y\in {\Cs}_{+-}, \quad \forall b\in {\Cs}_{--}, \ \forall y\in {\Cs}_{-+} 
\\
(x,y)\mapsto \ip{x^*}{y}_{\As}\in {\Cs}_{++}, \quad \forall x\in {\Cs}_{+-}, \ \forall y \in {\Cs}_{-+}, 
\\
(y,x)\mapsto {}_{\As}\ip{x}{y^*}\in {\Cs}_{--}, \quad \forall x\in {\Cs}_{+-}, \ \forall y \in {\Cs}_{-+}.
\end{gather*}
\end{proposition}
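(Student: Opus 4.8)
The plan is to verify directly that the prescribed data satisfy the axioms in the definition of C*-category, reading the construction as the ``linking category'' of the involutive bimodule $\Ms$, i.e.~the two-object analogue of the linking algebra of an imprimitivity bimodule. First I would check that $\Cs$ is a genuine category. That each of the listed compositions lands in the stated block is immediate from the definitions, and the identities are taken to be $1_\As\in\Cs_{++}$ and $1_\As\in\Cs_{--}$, the unit laws on the off-diagonal blocks following from the unitality hypothesis $1_\As\cdot x=x$ on $\Ms$. The real content is associativity, which I would treat by a case analysis according to the block types of the three composable morphisms.

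The routine cases --- three diagonal morphisms, or a diagonal morphism composed with module actions --- reduce to associativity of the product in $\As$ and to the bimodule compatibility $(ax)b=a(xb)$. The decisive case is the one in which two off-diagonal morphisms are first composed into a diagonal morphism (an inner product) which then acts on a third off-diagonal morphism: here $(\,{}_\As\ip{x}{y}\,)\cdot z$ must equal $x\cdot(\ip{y}{z}_\As)$, and this is exactly the standing hypothesis ${}_{\As}\ip{x}{y} z=x\ip{y}{z}_{\As}$. The remaining mixed cases, in which a diagonal morphism meets an inner-product composition, are handled using the module-involution axiom $(ax+y)^*=x^*a^*+y^*$ together with the standard Hilbert-module identity that governs how the $\As$-action on an entry of $\ip{\cdot}{\cdot}_\As$ passes across the inner product.

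Next I would equip the blocks with their Banach-space and involutive structure: $\Cs_{++}=\Cs_{--}=\As$ carry the C*-norm of $\As$, the off-diagonal blocks carry the Hilbert-module norm of $\Ms$, and the involution $*:\Cs\to\Cs$ is the C*-involution of $\As$ on the diagonal and the module involution on $\Cs_{+-}\leftrightarrow\Cs_{-+}$. Checking that $*$ is a contravariant, conjugate-linear, involutive functor is again a block-by-block verification; the only non-formal point is that $*$ reverses the inner-product compositions, which is guaranteed by the compatibility axiom ${}_{\As}\ip{x}{y}^*=\ip{y^*}{x^*}_{\As}$.

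Finally, the C*-norm axioms. For diagonal morphisms they are simply the C*-algebra axioms of $\As$. For an off-diagonal $x$ I would compute $x^*\circ x$ from the composition rules and find it equal to the inner product ${}_\As\ip{x}{x}$ (respectively $\ip{x}{x}_\As$); its positivity in $\Hom_\Cs(A,A)=\As$ is then the positivity axiom of the Hilbert module, the identity $\|x^*\circ x\|=\|x\|^2$ is the defining property of the module norm, and submultiplicativity $\|\xi\circ\eta\|\le\|\xi\|\,\|\eta\|$ follows from the Cauchy--Schwarz inequality for the module inner product and from continuity of the module actions. I expect the main obstacle to be twofold: correctly tracking the module involution $*$ inside the inner-product compositions so that associativity and the contravariance of $*$ close up, and verifying that the left and right $\As$-valued inner products induce one and the same norm on $\Ms$ (again a consequence of the imprimitivity identity), so that a single norm makes all four blocks simultaneously into a C*-category. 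An alternative, more conceptual route would sidestep these computations by representing $\As$ faithfully on a Hilbert space $H$, forming $H':=\Ms\otimes_\As H$, and realizing $\Cs$ as a $*$-subcategory of $\Bs(\{H,H'\})$; all C*-axioms would then be inherited from $\Bs(\{H,H'\})$, the delicate step becoming the verification --- via ${}_{\As}\ip{x}{y}^*=\ip{y^*}{x^*}_{\As}$ --- that the module involution implements the Hilbert-space adjoint.
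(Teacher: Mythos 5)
The paper offers no proof of this proposition at all, so there is nothing to compare your strategy against; your block-by-block verification of the linking-category axioms is the natural and presumably intended route, and your identification of the imprimitivity identity ${}_\As\ip{x}{y}z=x\ip{y}{z}_\As$ as the engine of associativity, of the compatibility axiom ${}_\As\ip{x}{y}^*=\ip{y^*}{x^*}_\As$ as what makes $*$ contravariant, and of $x^*\circ x={}_\As\ip{x}{x}$ as the source of positivity and of the C*-identity, is all correct in spirit.

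However, there is a genuine gap at precisely the point you wave through as ``the remaining mixed cases.'' Carry out the associativity check for $x\in\Cs_{+-}$, $y\in\Cs_{-+}$, $z\in\Cs_{+-}$ with the composition rules as printed: you need $\ip{x^*}{y}_\As\,z=x\,{}_\As\ip{z}{y^*}$. Using the compatibility axiom one finds $\ip{x^*}{y}_\As={}_\As\ip{x}{y^*}$ and ${}_\As\ip{z}{y^*}=\ip{z^*}{y}_\As$, so by imprimitivity the left side is $x\,\ip{y^*}{z}_\As$ while the right side is $x\,\ip{z^*}{y}_\As$ --- and these differ in general. Concretely, take $\As=\Ms=M_2(\CC)$ with $m^*$ the matrix adjoint, $\ip{m}{n}_\As=m^*n$ and ${}_\As\ip{m}{n}=mn^*$ (a unital involutive Hilbert C*-bimodule satisfying every hypothesis), and $x=1$, $y=e_{12}$, $z=e_{21}$: then $(x\circ y)\circ z=e_{12}e_{21}=e_{11}$ but $x\circ(y\circ z)=1\cdot(e_{21}e_{12})=e_{22}$. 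The same mismatch breaks the pattern $\Cs_{-+},\Cs_{+-},\Cs_{-+}$. So the proposition is false as literally printed: the last composition rule must read $(y,x)\mapsto{}_\As\ip{y}{x^*}$ (equivalently $\ip{y^*}{x}_\As$) --- the roles of $x$ and $y$ are transposed by a misprint, just as the targets of four of the rules are misprinted as $\Cs_{+-}$ instead of $\Cs_{-+}$. With that correction all the identities you list do reduce to the stated axioms (both mixed associativity patterns and the contravariance of $*$ close up), and the remainder of your outline --- positivity of $x^*\circ x$, the C*-identity from the module norm, the coincidence of the left and right norms via imprimitivity --- is sound. A complete proof must either perform these computations explicitly, thereby detecting and repairing the rule, or take your alternative representation-theoretic route, which would equally force the corrected formula in order for the module involution to implement the Hilbert-space adjoint.
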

We will also denote by $[\As,\Ms]$ the previous C*-category.  

\begin{theorem}
To every unital Kre\u\i n C*-algebra $\As$ with a given fundamental symmetry $\alpha:\As\to\As$, we can associate a Kre\u\i n C*-category 
$\Cs_{(\As,\alpha)}:=[{\As}_+,{\As}_-]$ with two objects. 
\end{theorem}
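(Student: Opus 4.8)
The plan is to reduce the statement to the two tools already at hand: the linking--category construction $[\As,\Ms]$ recalled above and Proposition~\ref{prop: kc*}. Concretely, I would manufacture a genuine $C^*$-category out of the $\dag_\alpha$-data of $\As$ and then install the fundamental symmetry as a grading functor. First I would pass to the $C^*$-picture: since $\alpha\circ\alpha=\id_\As$, the automorphism $\alpha$ is an involution and splits $\As=\As_+\oplus\As_-$ into its $\pm1$-eigenspaces, exactly as a fundamental symmetry splits a Kre\u\i n space. Equipping $\As$ with the $C^*$-involution $x^{\dag_\alpha}:=\alpha(x^*)$ and its unique $C^*$-norm $\|\cdot\|_\alpha$, one verifies that $\alpha$ is a $\dag_\alpha$-preserving $*$-automorphism of the $C^*$-algebra $(\As,\dag_\alpha)$, hence isometric; therefore $\As_+$ and $\As_-$ are closed. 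Because $\alpha$ is multiplicative and fixes $1_\As$, the eigenspace $\As_+$ is a unital $C^*$-subalgebra, and on $\As_+$ the two involutions coincide, $x^{\dag_\alpha}=x^*$.

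Next I would exhibit $\As_-$ as a unital involutive Hilbert $C^*$-bimodule over $(\As_+,\dag_\alpha)$. The bimodule actions are the multiplications of $\As$ restricted to $\As_+\times\As_-$ and $\As_-\times\As_+$ (these land in $\As_-$ since $\alpha$ reverses the sign there), the module involution is the original $*$ (which maps $\As_-$ into itself), and the two $\As_+$-valued inner products are
\begin{equation*}
\ip{m}{n}_{\As_+}:=\alpha(m^*)\,n=-m^*n,\qquad {}_{\As_+}\ip{m}{n}:=m\,\alpha(n^*)=-m\,n^*,
\end{equation*}
both of which lie in $\As_+$. I expect the involutive-bimodule axioms and the imprimitivity identity ${}_{\As_+}\ip{x}{y}z=x\ip{y}{z}_{\As_+}$ to reduce to associativity and the $*$-relations of $\As$, so these are routine.

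The one genuinely Kre\u\i n-flavoured point---and the step I expect to be the crux---is positivity: one has $\ip{m}{m}_{\As_+}=m^{\dag_\alpha}m$, which is positive in $(\As_+,\dag_\alpha)$ precisely because the sign flip of $\alpha$ converts the $*$-indefinite element $m^*m$ into a $\dag_\alpha$-positive one. The $C^*$-identity $\|m^{\dag_\alpha}m\|_\alpha=\|m\|_\alpha^2$ then simultaneously yields non-degeneracy and identifies the module norm with the restriction of $\|\cdot\|_\alpha$, so completeness of $\As_-$ follows from its closedness. Granting this, the linking--category construction applies and produces a $C^*$-category $[\As_+,\As_-]$ with objects $\{+,-\}$, diagonal blocks $\As_+$ and off-diagonal blocks $\As_-$.

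Finally I would promote this $C^*$-category to a Kre\u\i n $C^*$-category by means of a grading functor. Define $\beta\colon[\As_+,\As_-]\to[\As_+,\As_-]$ to fix the objects, to act as the identity on the diagonal blocks $\Cs_{++}=\Cs_{--}=\As_+$, and as multiplication by $-1$ on the off-diagonal blocks $\Cs_{+-}=\Cs_{-+}=\As_-$. Since the composite of two off-diagonal morphisms is diagonal, the two signs cancel, so $\beta$ respects composition; it clearly satisfies $\beta\circ\beta=\id$, fixes objects, and commutes with the $C^*$-involution $\dag$ of $[\As_+,\As_-]$ (which preserves the diagonal/off-diagonal grading). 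Thus $\beta$ is a covariant $\dag$-functor of exactly the type required by Proposition~\ref{prop: kc*}, which then equips $[\As_+,\As_-]$ with the involution $x^{*_\beta}=\beta(x^\dag)$ and makes it a Kre\u\i n $C^*$-category with fundamental symmetry $\beta$---the categorical avatar of $\alpha$. Setting $\Cs_{(\As,\alpha)}:=[\As_+,\As_-]$ completes the construction.
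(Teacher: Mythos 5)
Your proposal is correct and follows essentially the same route as the paper: pass to the C*-algebra $\As^\alpha$ via $x^{\dag_\alpha}=\alpha(x^*)$, split into the even part $\As_+$ (a unital C*-algebra) and the odd part $\As_-$ (a unital involutive Hilbert C*-bimodule), form the linking C*-category $[\As_+,\As_-]$, and apply Proposition~\ref{prop: kc*} to the grading functor --- your $\beta$ is exactly the restriction of $\alpha$, which is the identity on $\As_+$ and $-1$ on $\As_-$, as in the paper. You in fact supply more detail than the paper does (explicit inner products, positivity via $m^{\dag_\alpha}m$, closedness of the eigenspaces), which is welcome.
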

\begin{proof} 
For a given fundamental symmetry $\alpha$, the Kre\u\i n C*-algebra $\As$ becomes a \hbox{C*-al}gebra, here denoted by $\As^\alpha$, with involution $x^{\dag_\alpha}:=\alpha(x^*)$ with respect to a unique C*-norm 
$\|\cdot\|_\alpha$ and $\alpha:\As\to\As$ becomes a unital $\dag_\alpha$-automorphism of this C*-algebra $\As^\alpha$. 
The set $\As_+^\alpha$ of even elements of $\As^\alpha$ under the action of $\alpha$ is a unital C*-algebra and the set 
$\As_-^\alpha$ of odd elements is a unital involutive Hilbert C*-bimodule over $\As_+^\alpha$ so that, by the previous proposition we can construct a 
C*-category $\Cs_{(\As,\alpha)}^\alpha:=[\As_+^\alpha,\As_-^\alpha]$. 
Note that $\alpha$ induces by restriction a $\dag_\alpha$-functor of the C*-category 
$\Cs_{(\As,\alpha)}^\alpha$ that acts identically on the objects and it is involutive, 
i.e.~$\alpha\circ\alpha=\id$, and so by proposition~\ref{prop: kc*} we have that 
$\Cs_{(\As,\alpha)}^\alpha$ becomes a Kre\u\i n \hbox{C*-category}, here simply denoted by $[\As_+,\As_-]$, with respect to the involution given by $x\mapsto x^*=\alpha(x^{\dag_\alpha})$.  
\end{proof} 

\begin{proposition}\label{prop: isoenv}
For a Kre\u\i n C*-algebra $\As$ with a given fundamental symmetry $\alpha$, there is a canonical unital $*$-isomorphism 
$\Psi:\As^\alpha\to\Es(\Cs_{(\As,\alpha)})$. 
\end{proposition}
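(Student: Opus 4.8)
The plan is to realize $\As^\alpha$ itself as the C*-envelope of $\Cs_{(\As,\alpha)}=[\As_+^\alpha,\As_-^\alpha]$ and to let $\Psi$ be the resulting canonical identification. First I recall that, by the construction of the previous proposition, the two diagonal Hom-algebras of $\Cs_{(\As,\alpha)}$ are copies of the even part $\As_+^\alpha$ and the two off-diagonal Hom-bimodules are copies of the odd part $\As_-^\alpha$, so that $[\As_+^\alpha,\As_-^\alpha]$ is nothing but the linking C*-category encoding the order-two grading $\As^\alpha=\As_+^\alpha\oplus\As_-^\alpha$ determined by $\alpha$. Since the C*-envelope is characterised by its universal factorization property and is unique up to unital isomorphism, it suffices to equip $\As^\alpha$ with a generating unital $*$-functor and to verify that $\As^\alpha$ enjoys that property.

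Concretely, I would define a unital $*$-functor $j:\Cs_{(\As,\alpha)}\to\As^\alpha$ sending both objects $+,-$ to the unique object of $\As^\alpha$ and both identities to the common unit, $j(1_+)=j(1_-)=1_{\As^\alpha}$; on each diagonal block $j$ is the inclusion $\As_+^\alpha\hookrightarrow\As^\alpha$, while on the off-diagonal blocks $\Cs_{+-}=\Cs_{-+}=\As_-^\alpha$ it is $x\mapsto i\,x$, the imaginary unit being inserted precisely to absorb the sign produced by the inner-product compositions. Given any unital $*$-functor $\phi:\Cs_{(\As,\alpha)}\to\Bs$ into a unital C*-algebra, I would then set, for $x=x_++x_-$ its even/odd decomposition, $\Phi(x):=\phi(x_+)-i\,\phi(x_-)$, and check that $\Phi:\As^\alpha\to\Bs$ is the unique unital $*$-homomorphism with $\Phi\circ j=\phi$ (uniqueness being automatic, since $j(\As_+^\alpha)$ and $j(\As_-^\alpha)$ generate $\As^\alpha$). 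This exhibits $\As^\alpha$ as a C*-envelope and therefore yields the canonical unital $*$-isomorphism $\Psi:\As^\alpha\to\Es(\Cs_{(\As,\alpha)})$, explicitly
\begin{equation*}
\Psi(x_++x_-)=\iota(x_+)-i\,\iota(x_-),
\end{equation*}
where $\iota$ is the canonical embedding of $\Cs_{(\As,\alpha)}$ into its envelope and $x_+\in\Cs_{++}$, $x_-\in\Cs_{+-}$.

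The hard part will be the multiplicativity check, i.e.~verifying that $j$ (equivalently $\Phi$, or $\Psi$) is an algebra map. This amounts to matching the eight composition rules of the linking category against the single associative product of $\As^\alpha$ restricted to its graded pieces: the two purely diagonal rules reproduce the multiplication of $\As_+^\alpha$, the left/right action rules reproduce products of an even with an odd element, and the two ``inner product'' rules $\Cs_{+-}\times\Cs_{-+}\to\Cs_{\pm\pm}$ reproduce products of two odd elements. The delicate point is that on $\As_-^\alpha$ the module involution coincides with $-\dag_\alpha$, so these last rules carry a minus sign relative to the product in $\As^\alpha$; the factors of $i$ introduced by $j$ contribute $i^2=-1$ exactly on those terms and cancel the sign, while leaving the action rules unaffected. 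Checking simultaneously that $j$ intertwines the category involution with $\dag_\alpha$ fixes the sign of $i$ and, together with $j(1_+)=j(1_-)=1$, is what forces the two diagonal copies of $\As_+^\alpha$ to be consistently identified.

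Finally I would confirm bijectivity of $\Psi$. Injectivity is immediate from $\Phi\circ\Psi=\id_{\As^\alpha}$, which reduces via $\Phi\circ\iota=j$ to the identity $x_+-i(i\,x_-)=x_++x_-$; surjectivity follows because the image of $\Psi$ is a C*-subalgebra containing $\iota(\Cs_{++})$ and $\iota(\Cs_{+-})$, which together with their adjoints and the common unit generate the whole envelope. The isomorphism $\Psi$ is canonical because the entire construction depends only on the fixed fundamental symmetry $\alpha$ and its induced grading.
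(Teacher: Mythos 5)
Your overall strategy is exactly the paper's: exhibit a unital $*$-functor from $\Cs_{(\As,\alpha)}=[\As_+^\alpha,\As_-^\alpha]$ to $\As^\alpha$, verify the universal factorization property by writing down $\Phi$ explicitly on the even/odd decomposition, and invoke uniqueness of the C*-envelope. The one substantive point of divergence is your twist by the imaginary unit on the off-diagonal blocks. The paper's $\iota$ is the plain inclusion and its $\Phi$ is $x_++x_-\mapsto\phi(x_+)+\phi(x_-)$, with no $i$ anywhere. Whether the $i$ is needed hinges on which involution on $\As_-^\alpha$ is taken as the module involution when forming the linking category: your premise is that it is $-\dag_\alpha$ (the restriction of the original Kre\u\i n involution), which makes the two ``inner product'' compositions equal to \emph{minus} the product in $\As^\alpha$ and forces the correcting factor $i^2=-1$. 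The paper instead takes the module involution to be $\dag_\alpha$ itself (this is consistent with its statement that the C*-category $\Cs^\alpha_{(\As,\alpha)}$ carries the involution $\dag_\alpha$ and only the Kre\u\i n category $\Cs_{(\As,\alpha)}$ carries $x\mapsto\alpha(x^{\dag_\alpha})=x^*$); with $\ip{x}{y}_{\As_+^\alpha}=x^{\dag_\alpha}y$ one gets $\ip{x^{\dag_\alpha}}{y}_{\As_+^\alpha}=xy$, so every composition in $[\As_+^\alpha,\As_-^\alpha]$ coincides on the nose with the product in $\As^\alpha$ and the untwisted inclusion is already a $*$-functor.

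Under the paper's convention your map $j$ actually fails the multiplicativity check you correctly identify as the hard part: for odd $x,y$ one would have $j(x\circ y)=xy$ (the value lands in a diagonal block, where $j$ is the inclusion) but $j(x)\,j(y)=(ix)(iy)=-xy$. So the factor of $i$ is not a harmless normalization; it is correct precisely when the sign you posit is present and wrong otherwise. Since the paper is admittedly terse about which involution equips $\As_-^\alpha$, your argument is internally coherent given your reading, and the two conventions produce isomorphic linking categories (the isomorphism being exactly multiplication by $i$ on the off-diagonal blocks), so the conclusion is unaffected. But you should state explicitly which module involution and which inner products on $\As_-^\alpha$ you are using before introducing the twist; as written, the justification ``the module involution coincides with $-\dag_\alpha$'' contradicts the construction of $\Cs^\alpha_{(\As,\alpha)}$ as a C*-category with involution $\dag_\alpha$ that the proposition refers back to. The remainder (uniqueness of $\Phi$ from generation by the blocks, and canonicity of $\Psi$ from the universal property) matches the paper.
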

\begin{proof}
The inclusion of $\As_+$ and $\As_-$ into $\As^\alpha:=\As_+\oplus \As_-$ induces a $*$-functor $\iota$ from 
$\Cs_{(\As,\alpha)}^\alpha$ to 
$\As^\alpha$ and for every $*$-functor $\phi:\Cs^\alpha_{(\As,\alpha)}\to\Bs$ to a unital C*-algebra $\Bs$, defining 
$\Phi:\As^\alpha\to\Bs$ by 
$\Phi(x_++x_-):=\phi(x_+)+\phi(x_-)$ we check that $\Phi$ is the unique unital $*$-homomorphism such that 
$\Phi\circ\iota=\phi$ and hence $\As^\alpha$ is a \hbox{C*-envelope} of $\Cs^\alpha_{(\As,\alpha)}$ and by the unique factorization property of the 
C*-envelope we get that $\As^\alpha$ is canonically isomorphic to $\Es(\Cs^\alpha_{(\As,\alpha)})$. 
\end{proof}

As an immediate application we obtain the following generalization of Gel'fand-Na\u\i mark representation theorem for unital Kre\u\i n 
C*-algebras. 
\begin{theorem} 
Every unital Kre\u\i n C*-algebra admits at least one faithful representation on a Kre\u\i n space and it is isomorphic to a closed unital 
$*$-subalgebra of a Kre\u\i n C*-algebra of continuous operators on a Kre\u\i n space. 
\end{theorem}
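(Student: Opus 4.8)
The plan is to deduce this statement from the representation theory of the associated two-object Kre\u\i n C*-category developed above, thereby reducing everything to the already established categorical Gel'fand-Na\u\i mark theorem (Theorem~\ref{th: gn}).

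First I would fix a fundamental symmetry $\alpha:\As\to\As$. This turns $\As$ into an honest C*-algebra $\As^\alpha$ with involution $x^{\dag_\alpha}:=\alpha(x^*)$ and its unique C*-norm $\|\cdot\|_\alpha$, and $\alpha$ becomes a $\dag_\alpha$-automorphism with $\alpha\circ\alpha=\id_\As$. I would then form the two-object C*-category $\Cs^\alpha_{(\As,\alpha)}=[\As_+,\As_-]$ and apply Theorem~\ref{th: gn}: it yields a faithful, hence isometric, $*$-functor $\pi:\Cs^\alpha_{(\As,\alpha)}\to\Bs(\Hf)$ representing the category on a family $\Hf=\{H_+,H_-\}$ of Hilbert spaces indexed by the two objects. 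Passing to C*-envelopes, functoriality (Proposition~\ref{prop: env}) together with the canonical isomorphism $\Psi:\As^\alpha\to\Es(\Cs^\alpha_{(\As,\alpha)})$ of Proposition~\ref{prop: isoenv} gives a map $\Pi:=\Es(\pi)\circ\Psi:\As^\alpha\to\Es(\Bs(\Hf))=\Bs(H)$, where $H:=H_+\oplus H_-$. This $\Pi$ is an injective unital $\dag_\alpha$-preserving $*$-homomorphism: injectivity holds because $\pi$ is mono and $\Es$ preserves monos, while isometry is automatic for an injective $*$-homomorphism of C*-algebras.

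Next I would read off the block structure of $\Pi$. Since the representation arises from the two-object category, the even elements $\As_+$ (fixed by $\alpha$) are carried into the diagonal blocks $\Bs(H_+)\oplus\Bs(H_-)$, while the odd elements $\As_-$ (negated by $\alpha$) are carried into the off-diagonal blocks $\Hom(H_+,H_-)\oplus\Hom(H_-,H_+)$. Introducing the self-adjoint involutive unitary $J:=1_{H_+}\oplus(-1_{H_-})$ on $H$, this block pattern says precisely that $\alpha$ is spatially implemented, namely $\Pi(\alpha(x))=J\Pi(x)J$ for every $x\in\As^\alpha$. I regard establishing this compatibility — that the $\ZZ_2$-grading of $\As^\alpha$ induced by $\alpha$ matches the diagonal/off-diagonal decomposition produced by the two objects — as the crux of the argument; everything else is bookkeeping.

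Finally I would equip $H$ with the Kre\u\i n structure whose fundamental symmetry is $J$, so that $H_+$ is positive, $H_-$ is negative, and the Kre\u\i n adjoint of an operator $T$ is $T^{[*]}=JT^*J$. Combining the spatial implementation with the identity $x^*=\alpha(x^{\dag_\alpha})$ yields, for all $x\in\As$,
\[
\Pi(x^*)=\Pi(\alpha(x^{\dag_\alpha}))=J\,\Pi(x^{\dag_\alpha})\,J=J\,\Pi(x)^*\,J=\Pi(x)^{[*]},
\]
so $\Pi$ intertwines the original Kre\u\i n involution of $\As$ with the Kre\u\i n adjoint on $\Bs(H)$, i.e.\ it is a faithful representation of $\As$ on the Kre\u\i n space $H$. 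Being isometric in the $\alpha$-norm, the image $\Pi(\As)$ is complete, hence closed; and by the displayed intertwining relation together with $\alpha(\As)=\As$, it is a unital $*$-subalgebra of $\Bs(H)$ stable under the fundamental symmetry $\Ad_J$ of $\Bs(H)$. It is therefore a Kre\u\i n C*-algebra of continuous operators on a Kre\u\i n space, isomorphic to $\As$ via $\Pi$, which completes the proof.
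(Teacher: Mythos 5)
Your proposal is correct and follows essentially the same route as the paper: fix a fundamental symmetry $\alpha$, pass to the two-object C*-category $[\As_+,\As_-]$, apply the Gel'fand--Na\u\i mark theorem for C*-categories, transport everything through the C*-envelope via Propositions~\ref{prop: env} and~\ref{prop: isoenv}, and then twist by the fundamental symmetry $J$ of $K=H_+\oplus(-H_-)$ so that $\Ad_J$ spatially implements $\alpha$ and the Hilbert adjoint is converted into the Kre\u\i n adjoint. The step you single out as the crux --- the matching of the $\ZZ_2$-grading of $\As^\alpha$ with the diagonal/off-diagonal block decomposition, giving $\Pi\circ\alpha=\Ad_J\circ\Pi$ --- is exactly the intertwining relation $\Es(\rho)\circ\alpha=\alpha_J\circ\Es(\rho)$ on which the paper's argument also turns.
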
 
\begin{proof}
Let $\As$ be a unital Kre\u\i n C*-algebra, let $\alpha$ be one of its fundamental symmetries and $\|\cdot\|_\alpha$ the associated 
C*-norm. Consider now the C*-category $\Cs^\alpha_{(\As,\alpha)}:=[\As^\alpha_+,\As^\alpha_-]$ constructed above and note that, by 
Gel'fand-Na\u\i mark representation theorem for C*-categories~\ref{th: gn} there is an isometric $\dag_\alpha$-isomorphic functor $\rho$ of 
$\Cs^\alpha_{(\As,\alpha)}$ onto a closed C*-category $\rho(\Cs^\alpha_{(\As,\alpha)})$ of linear continuous operators between two Hilbert spaces $H_+$ and $H_-$. 
By proposition~\ref{prop: env} the \hbox{$*$-functor} $\rho$ can be lifted to a unital $\dag_\alpha$-isomorphism 
$\Es(\rho):\Es(\Cs^\alpha_{(\As,\alpha)})\to \Es(\rho(\Cs^\alpha_{(\As,\alpha)}))$ of the C*-envelopes. 

Since $\rho(\Cs^\alpha_{(\As,\alpha)})$ is a C*-category of linear continuous operators between two Hilbert spaces $H_+,H_-$, its 
C*-envelope coincides with the $2\times 2$ matrix \hbox{C*-algebra} with entries from $\rho(\Cs^\alpha_{(\As,\alpha)})$ acting on the Hilbert space $H_+\oplus H_-$ that is a closed $*$-subalgebra of $\Bs(H_+\oplus H_-)$.
Furthermore, by proposition~\ref{prop: isoenv} we can take $\Es(\Cs^\alpha_{(\As,\alpha)})=\As^\alpha$.

Consider now the Kre\u\i n space $K:=H_+\oplus (-H_-)$ obtained reversing the sign of the inner product on $H_-$ in the othogonal direct sum 
$H_+\oplus H_-$ and the fundamental symmetry $J: K\to K$ associated to such fundamental decomposition 
$J:\xi_++\xi_-\mapsto \xi_+-\xi_-$. 

The map $\alpha_J:\Bs(K)\to \Bs(K)$ given by $\alpha_J(T):=JTJ$ is a fundamental symmetry of the Kre\u\i n C*-algebra $\Bs(K)$ and restricted to the Kre\u\i n C*-category $\rho(\Cs_{(\As,\alpha)})$ gives a $*$-functor such that $\rho(\alpha(x))=\alpha_J(\rho(x))$, for all $x\in \Cs_{(\As,\alpha)}$ and that, by proposition~\ref{prop: env}, lifts to a unital $*$-automorphism of 
$\Es(\rho(\Cs_{(\As,\alpha)}))\subset\Bs(K)$ with $\Es(\rho)\circ\alpha=\alpha_J\circ\Es(\rho)$. 

The $*$-isomorphism of C*-algebras $\Es(\rho):\As^\alpha\to \Es(\rho(\Cs^\alpha_{(\As,\alpha)}))$ becomes a unital 
$*$-isomorphism of Kre\u\i n C*-algebras $\Es(\rho):\As\to\Es(\rho(\Cs_{(\As,\alpha)}))^{\alpha_J}$, where the involution in 
$\Es(\rho(\Cs_{(\As,\alpha)}))^{\alpha_J}$ is given by $T^*:=\alpha_J(T^\dag)$, with $T^\dag$ denoting the Hilbert space adjoint in 
$\Bs(H_+\oplus H_-)$. 
\end{proof}

More generally we can obtain a Gel'fand-Na\u\i mark representation for Kre\u\i n \hbox{C*-categories}.
There are other possible ways to prove the result: for example using the Gel'fand-Na\u\i mark theorem for Kre\u\i n C*-algebras applied to the envelope of the Kre\u\i n C*-category, but here we provide only a sketch of a direct proof that generalizes the argument provided in the previous theorem. 
\begin{theorem}
Let $\alpha$ be a fundamental symmetry of a Kre\u\i n C*-category $\Cs$. 
There is a faithful representation $\pi:\Cs\to\Bs(\Kf)$ that is $\alpha$-covariant i.e.~there is an isomorphism of the Kre\u\i n 
C*-category with a closed Kre\u\i n C*-category in $\Bs(\Kf)$ and there exists a family of fundamental symmetries $J_K$, with 
$K\in \Kf$, such that $\pi\circ\alpha=\alpha_{J}\circ\pi$. 
\end{theorem}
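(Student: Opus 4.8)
The plan is to reduce the statement to the Gel'fand--Na\u\i mark theorem for C*-categories (Theorem~\ref{th: gn}), which is already available, and then to implement the fundamental symmetry $\alpha$ by conjugation after a harmless doubling of the representation spaces; this is the direct analogue of the passage $K=H_+\oplus(-H_-)$ used in the algebra case. First I would pass from the Kre\u\i n C*-category $\Cs$ to the genuine C*-category $\Cs^\alpha$ having the same objects and morphisms but with the involution $x^{\dag_\alpha}:=\alpha(x^*)$; as already noted this is a C*-category for the unique C*-norm $\|\cdot\|_\alpha$, and $\alpha$ is then an involutive covariant $\dag_\alpha$-functor fixing the objects. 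Applying Theorem~\ref{th: gn} to $\Cs^\alpha$ produces an isometric $\dag_\alpha$-isomorphic representation $\rho:\Cs^\alpha\to\Bs(\Hf)$ onto a closed C*-subcategory of operators between a family of Hilbert spaces $\Hf=\{H_A\}_{A\in\Ob_\Cs}$.

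Next I would double the representation. For each object $A$ set $\widehat H_A:=H_A\oplus H_A$ and define $\sigma(x):=\rho(x)\oplus\rho(\alpha(x))$ for $x\in\Hom_\Cs(B,A)$. Using $\rho(\alpha(x)\circ\alpha(y))=\rho(\alpha(x))\rho(\alpha(y))$ and $\alpha\circ\alpha=\id_\Cs$ one checks that $\sigma$ is again a $\dag_\alpha$-functor; since it contains $\rho$ as a direct summand and $\alpha$ is $\|\cdot\|_\alpha$-isometric, $\sigma$ stays faithful and isometric for $\|\cdot\|_\alpha$. On each $\widehat H_A$ let $J_A$ be the flip $(\xi,\eta)\mapsto(\eta,\xi)$, a self-adjoint unitary with $J_A^2=\id$; it is therefore a fundamental symmetry whose positive and negative subspaces are the diagonal and the anti-diagonal, turning $\widehat H_A$ into a Kre\u\i n space $K_A$. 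I then collect these into the family $\Kf=\{K_A\}$ and read $\pi:=\sigma$ as a functor $\pi:\Cs\to\Bs(\Kf)$.

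The crux is that the flip simultaneously implements $\alpha$ and matches the two involutions. For $x\in\Hom_\Cs(B,A)$ a direct computation gives $J_A\,\sigma(x)\,J_B=\rho(\alpha(x))\oplus\rho(x)=\sigma(\alpha(x))$, so with $\alpha_J(T):=J_A T J_B$ we obtain $\pi\circ\alpha=\alpha_J\circ\pi$. Moreover, since $x^*=\alpha(x^{\dag_\alpha})$ and $\sigma$ is a $\dag_\alpha$-functor, the same relation yields $\sigma(x^*)=J_B\,\sigma(x)^\dag\,J_A=\sigma(x)^{[*]}$, where $[*]$ denotes the Kre\u\i n adjoint relative to the symmetries $J_A$; hence $\pi$ is a genuine $*$-functor of Kre\u\i n C*-categories. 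Faithfulness and isometry for $\|\cdot\|_\alpha$ pass from $\rho$ to $\sigma$, and the image $\pi(\Cs)$, being an isometric copy of the complete hom-spaces of $\Cs$ that is stable under $[*]$, under composition and under $\alpha_J$, is a closed Kre\u\i n C*-category inside $\Bs(\Kf)$.

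The step I expect to require the most care is precisely this last compatibility: one must verify that the single operator $J_A$ plays all three roles at once --- fundamental symmetry making $\widehat H_A$ a Kre\u\i n space, operator implementing $\alpha$ by conjugation, and the device matching the intrinsic involution $*$ of $\Cs$ with the Kre\u\i n-space adjoint in $\Bs(\Kf)$ --- all three being governed by the single identity $x^*=\alpha(x^{\dag_\alpha})$. Once this bookkeeping is organized (keeping track of source/target indices so that $\alpha_J(T)=J_A T J_B$ and $\sigma(x)^{[*]}=J_B\sigma(x)^\dag J_A$ act between the correct spaces), faithfulness, isometry and closedness are inherited from the C*-categorical Gel'fand--Na\u\i mark theorem exactly as in the one-object algebra case.
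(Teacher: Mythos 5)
Your argument is correct, but it takes a genuinely different route from the paper's. The paper doubles the \emph{objects}: it forms a category $\Cf_{(\Cs,\alpha)}$ with objects $\Ob_\Cs\times\{\pm\}$ whose hom-spaces are the $\alpha$-even and $\alpha$-odd parts of $\Hom_\Cs(A,B)$ (the categorical version of the linking category $[\As_+,\As_-]$ used in the algebra case), applies the Gel'fand--Na\u\i mark theorem~\ref{th: gn} to the associated C*-category, takes $J_A$ to be the sign reversal on the $H_{A-}$ component, and then reassembles $\Cs$ via a ``partial envelope'' construction that is only sketched. You instead double the \emph{representation spaces}: you represent $\Cs^\alpha$ directly, set $\widehat H_A=H_A\oplus H_A$, $\sigma=\rho\oplus(\rho\circ\alpha)$, and take $J_A$ to be the flip. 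Your computations check out: $J_A\sigma(x)J_B=\sigma(\alpha(x))$ because $\alpha\circ\alpha=\id_\Cs$; the identity $x^*=\alpha(x^{\dag_\alpha})$ together with $\sigma$ being a $\dag_\alpha$-functor does give $\sigma(x^*)=J_B\sigma(x)^\dag J_A$, which is exactly the Kre\u\i n adjoint for the symmetries $J$; $\alpha$ is $\|\cdot\|_\alpha$-isometric (being an invertible $\dag_\alpha$-functor of the C*-category $\Cs^\alpha$ --- a point you assert and could justify in one line), so $\|\sigma(x)\|=\max\{\|x\|_\alpha,\|\alpha(x)\|_\alpha\}=\|x\|_\alpha$, the image hom-spaces are complete and hence closed; and the positivity axiom for the image holds automatically since $\alpha_J(T^{[*]})\circ T=T^\dag T$. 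What your approach buys is a self-contained argument that bypasses the partial-envelope machinery and the even/odd decomposition entirely; what it gives up is the finer structural picture --- in the paper's construction the positive and negative parts of $K_A$ mirror the grading of $\Cs$ under $\alpha$, whereas your $K_A$ is always the balanced doubling $H_A\oplus(-H_A)$, so the representation is larger than necessary, though perfectly adequate for the existence statement being proved.
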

\begin{proof}
To every Kre\u\i n C*-category $\Cs$ with fundamental symmetry $\alpha$ and objects $\Ob_\Cs$, we associate another Kre\u\i n 
C*-category $\Cf_{(\Cs,\alpha)}$ with a ``doubled family of objects'' 
\begin{equation*}
\Ob_{(\Cf_{(\Cs,\alpha)})}:=\Ob_\Cs\times\{\pm\}.
\end{equation*} 
The families of morphisms are given by 
\begin{gather*}
\Hom_{\Cf_{(\Cs,\alpha)}}(A\pm,B\pm):=\{x\in \Hom_\Cs(A,B) \ | \ \alpha(x)=x\}, \quad \forall A,B\in \Ob_\Cs,  
\\
\Hom_{\Cf_(\Cs,\alpha)}(A\pm,B\mp):=\{x\in \Hom_\Cs(A,B) \ | \ \alpha(x)=-x\}, \quad \forall A,B\in \Ob_\Cs.
\end{gather*}
Consider the associated C*-category $\Cf^\alpha_{(\Cs,\alpha)}$ with involution $x\mapsto \alpha(x^*)$ and by Gel'fand-Na\u\i mark theorem for 
C*-categories, we obtain a faithful representation $\rho$ in a family of Hilbert spaces indexed by the family of objects 
$\Ob_\Cs\times\{\pm\}$. Define a family of fundamental symmetries $J_A: H_{A+}\oplus H_{A-}\to H_{A+}\oplus H_{A_-}$, indexed by $A\in \Ob_\Cs$, reverting the sign of the $H_{A-}$ component. 
Note that $\rho\circ \alpha=\alpha_J\circ\rho$. 
We can define a notion of \emph{partial envelope} of a ``doubly indexed'' C*-category with respect to one family of indexes (in our case $\{\pm\}$) via the usual universal factorization properties and note that we can assume 
$\Es_\pm(\Cf^\alpha_{(\Cs,\alpha)})=\Cs^\alpha$. 
Furthermore $\Es_\pm(\rho(\Cf^\alpha_{(\Cs,\alpha)}))$ consists of the C*-category with objects 
$\{H_{A_+}\oplus H_{A-}\ | \ A\in \Ob_\Cs\}$ and morphisms $T:H_{A_+}\oplus H_{A-}\to H_{B_+}\oplus H_{B-}$of the form 
\begin{equation*}
T:=
\begin{bmatrix}
\rho(x_+) & \rho(x_-)
\\
\rho(x_-) & \rho (x_+)
\end{bmatrix}, \quad  x\in \Hom_\Cs(A,B), 
\end{equation*}
Since $\Es_{\pm}(\rho):\Cs^\alpha\to\Es_{\pm}(\Cf^\alpha_{(\Cs,\alpha)})$ is an isomorphism of the C*-category $\Cs^\alpha$ onto a closed 
sub-C*-category of $\Bs(\Hf)$ such that $\Es_{\pm}(\rho)\circ\alpha=\alpha_J\circ\Es_{\pm}(\rho)$, defining the family of 
Kre\u\i n spaces $K_A:=H_{A+}\oplus (-H_{A-})$ indexed by $\Ob_\Cs$, we have that $\Es_{\pm}(\rho)$ is an isomorphism of the 
Kre\u\i n C*-category $\Cs$ with the Kre\u\i n C*-category 
$\Es_{\pm}(\Cf^\alpha_{(\Cs,\alpha)})^{\alpha_J}=\Es_{\pm}(\Cf_{(\Cs,\alpha)})$ that is a closed $*$-subcategory of $\Bs(\Kf)$. 
\end{proof}

\section{Final Remarks}

We have proposed here a tentative definition of Kre\u\i n C*-category that, following the general ideas outlined 
in~\cite[Section~4.2]{BCL1}, should provide a ``horizontal categorification'' of the notion of Kre\u\i n C*-algebra given by K.Kawamura. 
Using methods from the theory of C*-categories, we have also described a Gel'fand-Na\u\i mark representation theorem for Kre\u\i n C*-algebras and more generally Kre\u\i n C*-categories that should somehow justify the abstract axioms for Kre\u\i n C*-categories. 
It is of course possible to try to develop in all the details a theory of Kre\u\i n C*-categories (and also of Kre\u\i n W*-categories) that follows in parallel the theory of C*-algebras, here we only started in these directions. 

As described in~\cite{BCL1,BCL2}, C*-categories are just a special case of a more general notion of Fell C*-bundle. In a similar way, it is possible to give a definition of ``Kre\u\i n Fell bundles'' and ``Kre\u\i n spaceoids'' that might later be useful in the study of general spectral theories for (possibly non-commutative) Kre\u\i n C*-algebras (or categories). 
We leave these interesting points to future work.


\begin{thebibliography}{XX}

{\footnotesize

\bibitem{BW}
Barr M, Wells C (1995) 
\textit{Category Theory for the Computing Science}.  
Prentice Hall 

\bibitem{BCL1}
Bertozzini P, Conti R, Lewkeeratiyutkul W (2007) 
Non-Commutative Geometry, Categories and Quantum Physics.  
\textit{Contributions in Mathematics and Applications II, East-West~J~Math} special volume:213-259  
\hlink{http://arxiv.org/abs/0801.2826}{arXiv:math.OA/0801.2826v2}  

\bibitem{BCL2}
Bertozzini P, Conti R, Lewkeeratiyutkul W (2011)  
A Horizontal Categorification of Gel'fand Duality.  
\textit{Adv~Math} 226 n.1:584-607 
\hlink{http://arxiv.org/abs/0812.3601}{arXiv:math.OA/0812.3601v1}  

\bibitem{Bl}
Blackadar B (2008) 
\textit{Operator Algebras}. 
Springer 

\bibitem{DR}
Dritschel M, Rovnyak J (1996) 
Operators on Indefinite Inner Product Spaces.  
In: \textit{Lectures on Operator Theory and its Applications}  
Fields Institute Monographs, American Mathematical Society 3:141-232 

\bibitem{GLR}
Ghez P, Lima R, Roberts J (1985)  
W*-Categories.  
\textit{Pacific Journal of Mathematics} 120:79-109

\bibitem{G}
Geroch R (1985) 
\textit{Mathematical Physics}. 
University of Chicago Press 

\bibitem{Gi}
Ginzburg J (1957)  
On $J$-contractive Operator Functions. 
\textit{Dokl~Akad~Nauk~SSSR} 117:171-173

\bibitem{K}
Kawamura K, 
Algebra with Indefinite Involution and Its Representation in Krein Space.  
\\ 
\hlink{http://arxiv.org/abs/math/0610059}{arXiv:math.OA/0610059v2} 

\bibitem{Ka}
Kaewumpai S (2006) 
Krein C*-Modules.  
Master Degree Thesis, 
Thammasart University

\bibitem{L}
Landsman N, 
Lecture Notes on C*-Algebras, Hilbert C*-modules and Quantum Mechanics. 
\\ 
\hlink{http://arxiv.org/abs/math-ph/9807030}{arXiv:math-ph/9807030} 

\bibitem{La}
Lance E (1995)  
\textit{Hilbert C*-Modules}.  
Cambridge University Press

\bibitem{M}
Mitchener P (2002) 
C*-Categories.  
\textit{Proceedings of London Mathematical Society} 84:374-404

\bibitem{Mc}
MacLane S (1998)
\textit{Categories for the Working Mathematician}.   
Springer

\bibitem{Po} 
Pontrjagin L (1944)
Hermitian Operators in Spaces with Indefinite Metric. 
\\ 
\textit{Izv~Akad~Nauk~SSSR~Ser~Mat} 8:243-280

\bibitem{Sc} 
Scheibe E (1960)  
\"Uber Hermetische Formen in Topologischen Vektor\-r\"aumen I.  
\\ 
\textit{Ann~Acad~Fenn~Ser~A} I:294

\bibitem{Su}
Sunder V (1997) 
\textit{Functional Analysis: Spectral Theory}. 
Birkh\"auser

\bibitem{Ta}
Takesaki M (1979) 
\textit{Theory of Operator Algebra I}. 
Springer

\bibitem{We2}
Weaver N (2001) 
Hilbert Bimodules with Involution.  
\textit{Canad~Math~Bull} 44 n3:355-369  
\hlink{http://arxiv.org/abs/math/9908119}{arXiv:math.OA/9908119}.

\bibitem{We1}
Weaver N (2001)  
\textit{Mathematical Quantization}.   
Chapman and Hall

}

\end{thebibliography}
\end{document}